\documentclass[11pt]{article}

\usepackage{amsmath,amssymb,amsthm}
\usepackage{mathtools}
\usepackage{bm}
\usepackage{geometry}
\usepackage{enumerate}
\usepackage{cite}

\newtheorem{theorem}{Theorem}[section]
\newtheorem{proposition}[theorem]{Proposition}

\newtheorem{corollary}[theorem]{Corollary}

\theoremstyle{definition}

\theoremstyle{remark}
\newtheorem{remark}[theorem]{Remark}

\newcommand{\ep}{\varepsilon}
\newcommand{\RR}{\mathbb{R}}
\newcommand{\CC}{\mathbb{C}}
\newcommand{\TT}{\mathbb{T}}
\newcommand{\p}{\partial}

\DeclareMathOperator{\diag}{diag}

\title{Local well-posedness for fourth-order nonlinear 
\\
dispersive systems on the one-dimensional torus 
\\
under structural conditions}
\author{
	Eiji Onodera\vspace{0.5em}\\
	Department of Mathematics and Physics, 
	\\Faculty of Science and Technology,  
	\\Kochi University, Kochi 780-8520, Japan\\
	\texttt{Email:onodera@kochi-u.ac.jp}
}
\date{}

\begin{document}

\maketitle

\begin{abstract}
We study local well-posedness of the initial value problem 
for a class of fourth-order nonlinear dispersive systems on the one-dimensional torus. 
The main difficulty comes from the loss of derivatives in the nonlinear terms. 
By introducing suitable structural conditions 
that compensate for derivative loss 
through cancellation mechanisms, 
and constructing modified energies 
via gauge-type transformations combined with 
a diagonalization procedure,
we establish local well-posedness in high-regularity Sobolev spaces.
The assumptions in the present result relax those in previous works,  
at the expense of requiring higher regularity. 
In particular, our structural conditions extend the range of admissible nonlinearities 
beyond the scalar case and provide a unified framework for treating multi-component systems.
\end{abstract}


\medskip
\noindent\textbf{Mathematics Subject Classification (2020).}
Primary 35G61; 
Secondary 
35A01; 
35A02; 
35E15; 
35G31. 

\noindent\textbf{Keywords.}
Fourth-order nonlinear dispersive systems; 
Local well-posedness; 
Loss of derivatives;
Diagonalization technique; 
Gauge-type transformations; 
Modified energies


\section{Introduction}
\label{sec:introduction}
We consider the following initial value problem 
\begin{equation}
\label{eq:IVP}
\begin{cases}
\displaystyle
\p_t Q - i M \p_x^4 Q - M_S \p_x^2 Q
= F(Q, \p_x Q, \p_x^2 Q), 
\\
Q(0,\cdot) = Q_0(\cdot)\in H^m(\TT;\CC^n),
\end{cases}
\end{equation}
where 
$Q = (Q_1, \dots, Q_n)^{\top}(t,x) : \mathbb{R} \times \mathbb{T} \to \mathbb{C}^n$ 
and $Q _0= (Q_{01}, \dots, Q_{0n})^{\top}(x): \mathbb{T} \to \mathbb{C}^n$  
are respectively 
an unknown function 
and a given initial function,  
$\mathbb{T} = \mathbb{R}/2\pi\mathbb{Z}$ denotes 
the one-dimensional torus, 
$i=\sqrt{-1}$ denotes the imaginary unit, 
\[
M=\diag(a_1, \ldots,a_n)
\quad
\text{for}
\quad  
(a_1,\ldots,a_n) \in (\mathbb{R} \setminus \{0\})^n,
\] 
and $M_S \in \mathbb{C}^{n \times n}$ is a constant skew-Hermitian matrix, 
i.e., $M_S^* = - M_S$. 
The nonlinear term 
$F(Q, \p_x Q, \p_x^2 Q)
= (F_1, \dots, F_n)^{\top}(Q, \p_x Q, \p_x^2 Q)$ 
depends on $Q$ and its derivatives up to second order, and 
is defined componentwise as follows: 
\begin{align}
\nonumber
F_j(Q, \p_x Q, \p_x^2 Q)
&=  
\sum_{p,q,r=1}^n
\omega^{1,j}_{p,q,r}
\, \p_x^2 Q_p
\, \overline{Q_q}
\, Q_r
+ 
\sum_{p,q,r=1}^n
\omega^{2,j}_{p,q,r}
\, \overline{\p_x^2Q_p}
\, Q_q
\, Q_r \notag\\
&\quad
+ 
\sum_{p,q,r=1}^n
\omega^{3,j}_{p,q,r}
\, \p_x Q_p
\, \overline{\p_x Q_q}
\, Q_r
+ 
\sum_{p,q,r=1}^n
\omega^{4,j}_{p,q,r}
\, \p_x Q_p
\, \p_xQ_q
\, \overline{Q_r}
\notag\\
&\quad
+O\left(\sum_{k=1}^{\text{finite}}|Q|^k\right)
\qquad 
(j\in \{1, \dots, n\}). 
\nonumber
\end{align}
Here, all coefficients 
$\omega^{k,j}_{p,q,r} \in \mathbb{C}$ 
for $p,q,r,j\in \{1,\ldots, n\}$ and $k\in \{1,\ldots,4\}$
are complex constants. 
For $m \in \mathbb{N}$, 
we denote by $H^m(\mathbb{T}; \mathbb{C}^n)$ 
the standard Sobolev space
of $\mathbb{C}^n$-valued functions on $\mathbb{T}$ 
with norm and $L^2$-inner product 
\begin{align}
\| f \|_{H^m}
&=\sqrt{\sum_{k=0}^m \| \p_x^k f \|_{L^2}^2} 
=\sqrt{
\sum_{k=0}^m \langle \p_x^k f, \p_x^k f\rangle
},
\nonumber
\\
\langle
f,g
\rangle
&:=
\int_{\TT} f(x)\cdot g(x) \,dx 
=
\sum_{j=1}^n
\int_{0}^{2\pi}f_j(x)\overline{g_j(x)}\,dx 
\nonumber
\end{align}
for   
$f=(f_1,\ldots,f_n)^{\top}$ and  
$g=(g_1,\ldots,g_n)^{\top}$ in $L^2(\TT;\mathbb{C}^n)$. 
\par 
The purpose of this paper is to present 
local well-posedness results for the initial value 
problem \eqref{eq:IVP} 
in high-regularity Sobolev spaces $H^m(\TT;\CC^n)$.
We emphasize that the presence of second- and first-order derivatives in the nonlinear terms 
causes the so-called loss of derivatives: 
Applying $\p_x^m$ to the system leads to terms involving derivatives of order $m+2$ or $m+1$, 
which cannot be controlled directly by the classical energy method based on the $H^m$ norm.
This is the main analytical difficulty in the study 
of \eqref{eq:IVP}, and special structural properties 
are required in order to close the energy estimates.
In addition, compared with the scalar case ($n=1$), 
the analysis of systems requires a more careful treatment 
of the interaction between different components.
The key idea of this paper is that the derivative loss can be compensated 
by exploiting structural conditions that eliminate certain second-order terms 
and transform the remaining problematic terms into divergence-type forms, 
allowing cancellation in the energy estimates.
Accordingly, the aim of this paper is to identify such conditions in a systematic way.
\par 
To state our results, 
we introduce structural conditions on the nonlinear terms.
We first comment on symmetry conditions (A1)--(A2) on the coefficients of $F$: 
\begin{align}
\tag{A1}
\omega^{2,j}_{p,q,r} 
= 
\omega^{2,j}_{p,r,q} 
\quad \text{for all} \quad p,q,r,j\in 
\{1,\ldots,n\}, 
\\
\tag{A2}
\omega^{4,j}_{p,q,r} 
= 
\omega^{4,j}_{q,p,r} 
\quad \text{for all} \quad p,q,r,j\in 
\{1,\ldots,n\}.
\end{align}
These conditions correspond to symmetric rearrangements of the nonlinear terms 
$$\sum_{p,q,r=1}^n
\omega^{2,j}_{p,q,r}\, \overline{\p_x^2Q_p}\, Q_q\, Q_r
\quad \text{and} \quad 
\sum_{p,q,r=1}^n
\omega^{4,j}_{p,q,r}\, \p_x Q_p\, \p_xQ_q\, \overline{Q_r},
$$
and do not restrict the generality of the formulation even if we impose them. 
We note that this paper will impose only (A2) to state Theorems \ref{thm:main} and \ref{thm:main2},  
because it turns out that 
any conditions on $\omega_{p,q,r}^{2,j}$ 
are not required to state these theorems. 
We next impose additional essential conditions (C1)-(C2):  
\begin{align}
\tag{C1}
\omega^{1,j}_{r,q,p} 
&=
-\overline{\omega^{1,r}_{j,p,q} }
\quad \text{for all} \quad p,q,r,j\in 
\{1,\ldots,n\}, 
\\
\tag{C2}
\omega^{3,j}_{r,q,p} 
-2 \overline{\omega^{4,r}_{j,q,p}}
&=
-(\overline{\omega^{3,r}_{j,p,q}} 
- 2 \omega^{4,j}_{r,p,q})
\quad \text{for all} \quad p,q,r,j
\in \{1,\ldots,n\}. 
\end{align}
Roughly speaking, (C1) eliminates the most problematic second-order terms, while (C2) reduces 
the remaining problematic first-order terms to 
divergence-type forms. 
These conditions can be viewed as a structural 
characterization of the mechanism that prevents 
the loss of derivatives in this class of systems.
\par 
We are now in a position to state the main result of this paper.
\begin{theorem}
\label{thm:main}
Assume that $M=a\,I_n$ for some 
$a\in \mathbb{R} \setminus \{0\}$ 
where $I_n$ denotes the identity matrix of order $n$ 
and $F(Q,\p_xQ,\p_x^2Q)$ satisfies 
all the conditions 
\emph{(A2)} and \emph{(C1)--(C2)}. 
Then the initial value problem \eqref{eq:IVP}  
 is time-locally well-posed in $H^m(\TT;\mathbb{C}^n)$ 
 for any integer $m\geqslant 5$ 
 in the following sense:
 \begin{enumerate}
 \item[\textup{(i)}] (Existence and uniqueness.) 
 For any $Q_0\in H^m(\TT;\mathbb{C}^n)$, there exists 
 $T=T(\|Q_0\|_{H^5})>0$ and a unique solution 
 $Q\in C([-T,T]; H^{m}(\TT;\mathbb{C}^n))$
 to \eqref{eq:IVP}.
 \item[\textup{(ii)}] (Continuous dependence with respect to the initial data in $H^m$-topology.) 
 Suppose that $T>0$ and $Q\in C([-T,T];H^m(\TT;\mathbb{C}^n))$ are respectively 
 the time and the unique solution 
 to \eqref{eq:IVP} with initial data $Q_0$ obtained in the above part \textup{(i)}. 
 Fix  $T^{\prime}\in (0,T)$. 
 Then for any $\eta>0$, there exists $\delta>0$ such that for any 
 $\widetilde{Q_0}\in H^m(\TT;\mathbb{C}^n)$ satisfying 
 $\|Q_0-\widetilde{Q}_0\|_{H^m}<\delta$, 
 the unique solution $\widetilde{Q}$ 
 to \eqref{eq:IVP} with initial data replaced by $\widetilde{Q}_0$ exists on 
 $[-T^{\prime},T^{\prime}]\times \TT$ and 
 satisfies 
 $\|Q-\widetilde{Q}\|_{C([-T^{\prime},T^{\prime}];H^m)}<\eta$.
 \end{enumerate}
\end{theorem}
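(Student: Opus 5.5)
We follow the standard scheme for dispersive equations with derivative loss: parabolic regularization, uniform energy estimates for a suitably modified energy, and a Bona--Smith argument for continuous dependence.

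\emph{Approximate solutions.} For $\ep\in(0,1)$ we consider the regularized system
\[
\p_t Q^\ep + \ep \p_x^4 Q^\ep - ia\p_x^4 Q^\ep - M_S\p_x^2 Q^\ep = F(Q^\ep,\p_xQ^\ep,\p_x^2Q^\ep), \qquad Q^\ep(0)=Q_0 .
\]
Since the linear part is parabolic of fourth order and $F$ has order two, a contraction argument in $C([0,T_\ep];H^m)$ gives a local solution $Q^\ep$, with $T_\ep$ depending on $\ep$. The whole task is then to obtain an $H^m$ bound that is uniform in $\ep$ on a time interval depending only on $\|Q_0\|_{H^5}$.

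\emph{Modified energy.} Apply $\p_x^m$ and set $Q_m=\p_x^m Q^\ep$. The equation for $Q_m$ has the form
\[
\p_t Q_m - ia\p_x^4 Q_m - M_S\p_x^2 Q_m = \mathcal{A}(Q)\p_x^2 Q_m + \mathcal{B}(Q)\overline{\p_x^2 Q_m} + \mathcal{C}(Q,\p_xQ)\p_x Q_m + \mathcal{D}(Q,\p_xQ)\overline{\p_xQ_m} + (\text{l.o.t.}),
\]
where the matrices $\mathcal{A},\mathcal{B},\mathcal{C},\mathcal{D}$ are built from the $\omega^{k,j}_{p,q,r}$. They include the contributions coming from distributing $\p_x^m$ onto the cubic factors; for instance, the $\omega^1$ and $\omega^2$ terms produce first-order terms with factor $m$.

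Condition (C1) says exactly that $\mathcal{A}(Q)$ is skew-Hermitian. Consequently $\mathrm{Re}\langle \mathcal{A}\p_x^2Q_m,Q_m\rangle$ reduces, after one integration by parts, to a term of the type $\langle (\p_x\mathcal{A})\p_xQ_m,Q_m\rangle$, i.e.\ an order-one loss. That first-order loss, combined with the genuine $\mathcal{C}$ term, has a Hermitian part. Condition (C2), together with (A2) to symmetrize $\omega^4$, should show that this Hermitian part is the $x$-derivative of a matrix built from $Q\otimes\overline{Q}$.

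To cancel it, we use a gauge-type modified energy in the spirit of the scalar case. We put
\[
\mathcal{E}_m = \|Q\|_{H^{m-1}}^2 + \|\Lambda Q_m\|_{L^2}^2, \qquad \Lambda = I + \frac{1}{a}\,\Phi(Q)\,\p_x^{-1}\text{-type correction},
\]
with $\Phi$ a matrix-valued function of $Q,\overline Q$ chosen so that the commutator $[ia\p_x^4,\Lambda]$ produces $4ia(\p_x\Phi)\p_x^3$-type terms cancelling the Hermitian first-order part. Here $M=aI_n$ is used essentially: the fourth-order operator commutes with matrix multipliers up to derivatives of the multiplier. For the conjugate terms $\mathcal{B}\overline{\p_x^2Q_m}$ and $\mathcal{D}\overline{\p_xQ_m}$ we use the diagonalization idea. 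Writing the system for the pair $(Q_m,\overline{Q_m})$, the leading operator becomes $\diag(ia,-ia)\p_x^4$, and the antidiagonal second-order couplings are removed by a transformation $I+\mathcal{B}'(Q)\p_x^{-2}$, where $\mathcal{B}'$ is obtained by dividing by the symbol difference $2ia\xi^4$. This leaves bounded remainders, which is why no condition on $\omega^2$ is needed.

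\emph{Closing the estimates and the main obstacle.} One obtains
\[
\frac{d}{dt}\mathcal{E}_m \le C(\|Q\|_{H^5})\,\mathcal{E}_m ,
\]
with $\mathcal{E}_m$ equivalent to $\|Q\|_{H^m}^2$ when $\|Q\|_{H^5}$ is bounded. The regularity $m\ge5$ enters through the Sobolev embedding, since the coefficients contain up to roughly four derivatives of $Q$ after the transformations. Taking $m=5$ gives a uniform time $T$; applying the bound for general $m$ then propagates $H^m$ regularity.

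A compactness argument (Aubin--Lions with weak-$*$ limits) yields a solution in $L^\infty H^m\cap C H^{m-1}$. Uniqueness follows from an analogous modified $L^2$ (or $H^1$) difference estimate. Strong continuity in $H^m$ and continuous dependence follow by the Bona--Smith method: mollify the data, compare the solutions in the modified $H^m$ energy, and control the high-frequency tails.

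The main obstacle is the algebra of the modified energy. One must verify that (C1)--(C2) make every order-$(m+1)$ contribution either skew, or a total derivative absorbable by the gauge $\Phi$. This has to hold simultaneously with the diagonalization of the conjugate terms, and the corrections must not reintroduce losses. I would first carry out the computation for the symbol of the system on $(Q_m,\overline{Q_m})$ at orders $2$ and $1$, then define $\Phi$ explicitly from the matrix identified by (C2).
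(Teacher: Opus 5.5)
Your overall architecture matches the paper: parabolic regularization; coupling $(Q_m,\overline{Q_m})$ and removing the block-off-diagonal second-order coupling by an order $-2$ correction obtained by dividing by $2ia\xi^4$; (C1) making the diagonal second-order coefficient skew-Hermitian; (A2)+(C2) turning the remaining diagonal first-order loss into $\p_x(D_m)\p_x$; and Bona--Smith. There are, however, two genuine gaps.

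The main gap is your claim that the diagonalization ``leaves bounded remainders.'' After the order $-2$ correction you still have block-off-diagonal \emph{first-order} terms. These are $\mathcal{D}(Q,\p_xQ)\overline{\p_xQ_m}$, together with the first-order pieces created by the commutator and by writing the skew part of the second-order coupling in divergence form. The symmetric part $S$ of this off-diagonal coefficient is harmless, since $\sum_{j,r}S_{jr}\overline{\p_xQ_{m,r}}\,\overline{Q_{m,j}}$ is a total derivative up to a zeroth-order term. The skew-symmetric part is not. In the paper's normalization it is $\frac12\{(\mathcal{D}-\mathcal{D}^{\top})-\p_x(\mathcal{B}-\mathcal{B}^{\top})\}$, which contains $(m-1)(\omega^{2,j}_{r,q,p}-\omega^{2,r}_{j,q,p})$ and $\omega^{3,j}_{p,r,q}-\omega^{3,r}_{p,j,q}$; (A2), (C1)--(C2) do not make it vanish. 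It gives a Wronskian-type pairing of frequencies $\xi$ and $-\xi$ with weight $|\xi|$, i.e.\ a loss of one derivative in $\frac{d}{dt}\|Q_m\|_{L^2}^2$.

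The paper removes this term with a second off-diagonal correction of order $-3$, $\mathcal{L}_{1,m}(Q,\p_xQ)\p_x^{-3}$; this is your divide-by-$2ia\xi^4$ idea applied to the first-order coupling. Its coefficient depends on $\p_xQ$, so $\p_t\Lambda_{1,m}$ contains $\p_x^5Q$. \emph{That} is the actual source of $m\ge5$, both in the $H^m$ estimate and in the $H^2$ difference estimate; it is not a generic Sobolev embedding. A scheme like yours, whose gauge coefficients depend only on $Q$, is essentially the paper's $m\ge4$ argument. That argument closes only under the extra conditions (A1), (D1)--(D2), which make this skew part vanish (Corollary~\ref{cor:cor}).

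The second gap is the order of your diagonal gauge. Since $[ia\p_x^4,\tfrac1a\Phi\p_x^{-1}]=4i(\p_x\Phi)\p_x^2+6i(\p_x^2\Phi)\p_x+\cdots$, an order $-1$ correction produces a second-order term, and its first-order part involves $\p_x^2\Phi$. Matching $\p_x(D_m)$ would then require $\p_x\Phi\propto D_m$, i.e.\ a periodic primitive of $D_m\sim Q\overline{Q}$, which does not exist in general. You need order $-2$: since $[ia\p_x^4,G\p_x^{-2}]=4ia(\p_xG)\p_x+\cdots$, take $G=\frac{1}{4ia}D_m$. This is the paper's $-M^{-1}\Lambda_{2,m}$ with $\Lambda_{2,m}=\frac14D_m i$, and it commutes with $iM\p_x^4$ because $M=aI_n$. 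Note also that the dangerous first-order piece is the one whose \emph{coefficient} is skew-Hermitian; it is ``Hermitian'' only at the level of the operator.

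Finally, your uniqueness step at the $L^2$ or $H^1$ level needs an extra device, because $\p_x^{-2}W$ and $\p_x^{-3}W$ are not defined for a difference $W$ with nonzero mean. The paper works with $\p_x^2W$ and replaces the ill-defined $\p_x^{-1}W$ contribution by the cross term $-\operatorname{Re}\langle\p_xW,M^{-1}\Lambda_{1,2}\overline{W}\rangle$ plus $A\|W\|_{H^1}^2$.
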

We state known results on local well-posedness on $\TT$. 
	The scalar case ($n=1$) was studied by 
	Segata~\cite{Segata2012}, 
	who established local well-posedness in $H^m(\TT;\CC)$ 
	for integers $m\geqslant 4$ under the assumption that  
	$\omega_{1,1,1}^{1,1}, \omega_{1,1,1}^{2,1},\omega_{1,1,1}^{3,1},
	\omega_{1,1,1}^{4,1}\in i\,\RR$. 
	The result was extended to the system case 
	by the present author~\cite{Onodera2025}, 
	who proved local well-posedness in $H^m(\mathbb{T}; \mathbb{C}^n)$ 
	for integers $m \geqslant 4$ and $n\geqslant 1$ 
	under the conditions (A1)--(A2) 
	and additional conditions (B1)--(B6): 
\begin{align}
	\tag{B1} 
	\omega_{p,q,r}^{1,j}&=\omega_{r,q,p}^{1,j} 
	\quad \text{for all} \quad p,q,r,j\in 
	\{1,\ldots,n\}, 
\\
	\tag{B2} 
	\omega_{p,q,r}^{1,j}&=-\overline{\omega_{j,r,q}^{1,p}} 
	\quad \text{for all} \quad p,q,r,j\in 
	\{1,\ldots,n\},  
\\
	\tag{B3} \omega_{p,q,r}^{2,j}&=-\overline{\omega_{r,j,p}^{2,q}}
	\quad \text{for all} \quad p,q,r,j\in 
	\{1,\ldots,n\},  
\\
	\tag{B4} \omega_{p,q,r}^{3,j}&=\omega_{r,q,p}^{3,j} 
	\quad \text{for all} \quad p,q,r,j\in 
	\{1,\ldots,n\},  
\\
	\tag{B5}  \omega_{p,q,r}^{3,j}&=-\overline{\omega_{j,r,q}^{3,p}} 
	\quad \text{for all} \quad p,q,r,j\in 
	\{1,\ldots,n\},  
\\
	\tag{B6} \omega_{p,q,r}^{4,j}&=-\overline{\omega_{j,r,q}^{4,p}} 
	\quad \text{for all} \quad p,q,r,j\in 
	\{1,\ldots,n\}. 
	\nonumber
\end{align}  
In the scalar case, the set of conditions 
(A1)--(A2) and (B1)--(B6) is equivalent to the following 
condition:  
\[
\omega_{1,1,1}^{1,1}, 
\omega_{1,1,1}^{2,1},
\omega_{1,1,1}^{3,1},
\omega_{1,1,1}^{4,1}\in i\,\RR. 
\] 
Hence, no additional solvable structure was obtained 
in the scalar case in \cite{Onodera2025}. 
\begin{remark}
\label{rem:6211}
	The structural conditions (A1)--(A2) and (B1)--(B6) arise naturally in several contexts.
	From a geometric viewpoint, these conditions are related to the structure of the Riemann curvature tensor 
	on locally Hermitian symmetric spaces $N$ 
	of complex dimension $n$, 
	which appears in the formulation of generalized bi-Schr\"odinger flow equations for curve flows on $N$. 
	To be more concretely, 
	the geometric equation was introduced by Ding and 
	Wang in \cite{DW2018}, 
	and it was proved in \cite{Onodera2019,Onodera2022}
	that the geometric structure of $N$ ensures 
	time-local solvability of the initial value problem. 
	It was also shown in \cite{Onodera2024} 
	that the solvable structure is formally inherited 
	to our $n$-component system as the 
	structural conditions (A1)--(A2) and (B1)--(B6) 
	via the generalized Hasimoto transformation. 
	From a physical perspective, these conditions 
	are also satisfied by models such as the 
	vortex filament equation \cite{fukumoto,FM} 
	and  systems describing the interaction of multiple pulses in optical fibers~\cite{WZY}. 
	For further details, see \cite{Onodera2025}.
\end{remark}
\par 
In contrast, the set of conditions (A2) and (C1)--(C2) 
in Theorem~\ref{thm:main} 
is weaker than that of conditions 
(A1)--(A2) and (B1)--(B6) in the previous results 
in \cite{Onodera2025}. 
Indeed, 
the conditions (C1)--(C2) are strictly weaker than (B1)--(B6), 
but the converse does not hold in general. 
In the scalar case, 
the set of conditions (A2) and (C1)--(C2) 
is equivalent to the following condition: 
\[
\omega_{1,1,1}^{1,1}\in i\,\RR 
\quad 
\text{and}
\quad 
\operatorname{Re}[\omega_{1,1,1}^{3,1}
-2\omega_{1,1,1}^{4,1}]=0,
\]
allowing non-purely imaginary complex numbers for 
$\omega_{1,1,1}^{2,1},
\omega_{1,1,1}^{3,1}$ and 
$\omega_{1,1,1}^{4,1}$. 
Thus, the present result also extends 
the scalar result of Segata \cite{Segata2012}.
Although this relaxation is already present in the scalar case, 
it becomes more apparent in the system case $n \ge 2$.
Therefore, the assumptions in Theorem~\ref{thm:main} relax those in previous works 
at the cost of requiring a higher regularity index $m \ge 5$. 
\par 
Further, we can present additional conditions (D1)--(D2) 
with (A1) to avoid the use of $m\geqslant 5$ 
in the proof of Theorem~\ref{thm:main}, 
ensuring local well-posedness in 
$H^m(\TT;\CC)$ for $m\geqslant 4$: 
\begin{align}
	\tag{D1} 
	\omega_{r,q,p}^{2,j}=\omega_{j,q,p}^{2,r} 
	\quad \text{for all} \quad p,q,r,j\in 
	\{1,\ldots,n\},  
\\
		\tag{D2} 
		\omega_{p,r,q}^{3,j}=\omega_{p,j,q}^{3,r} 
		\quad \text{for all} \quad p,q,r,j\in 
		\{1,\ldots,n\}.
\end{align}
\begin{corollary}
\label{cor:cor}
Assume that $M=aI_n$ for some 
$a\in \mathbb{R} \setminus \{0\}$
and $F(Q,\p_xQ,\p_x^2Q)$ satisfies 
all the conditions 
\emph{(A1)--(A2)}, \emph{(C1)--(C2)} and \emph{(D1)--(D2)}. 
Then \eqref{eq:IVP}  
 is time-locally well-posed in $H^m(\TT;\mathbb{C}^n)$ 
 for any integer $m\geqslant 4$.
\end{corollary}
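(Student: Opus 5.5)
The plan is to rerun the proof of Theorem~\ref{thm:main} and locate the single place where $m\geqslant 5$ (equivalently, control of $\|Q\|_{H^5}$ in the lifespan $T(\|Q_0\|_{H^5})$) enters. Then I will show that (A1) and (D1)--(D2) remove the need for it. The skeleton stays the same: a parabolic regularization $\p_t Q^\ep - iM\p_x^4Q^\ep - M_S\p_x^2 Q^\ep + \ep\p_x^4 Q^\ep = F$, uniform $H^m$ bounds via a modified energy, then passage to the limit, uniqueness, and continuous dependence by a Bona--Smith argument.

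Since $M=aI_n$, the energy computation for $\p_x^m Q$ produces three kinds of term.
\begin{enumerate}
\item[(a)] Top-order terms $\omega^{1}\,\p_x^{m+2}Q_p\overline{Q_q}Q_r$ paired with $\p_x^mQ_j$. After integrating by parts these are cancelled by (C1).
\item[(b)] Terms with $m+1$ derivatives coming from $\omega^1$ (after commuting), $\omega^3$ and $\omega^4$. Condition (C2) turns these into divergence form, $\p_x(\text{coeff})\,|\p_x^m Q|$-type terms, plus a remainder of the form $\langle B(Q,\overline{Q})\p_x^{m+1}Q,\p_x^mQ\rangle$ with a matrix coefficient. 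The remainder is absorbed by a gauge-type correction to the energy: for $M=aI_n$, add $\langle \Lambda(Q)\p_x^{m-1}Q,\p_x^{m-1}Q\rangle$-type terms and a diagonalization step built from $\p_x^{-1}$ of quadratic expressions in $Q$.
\item[(c)] The conjugate-type term $\omega^2\,\overline{\p_x^{m+2}Q_p}\,Q_qQ_r$ paired with $\p_x^m Q_j$.
\end{enumerate}
In Theorem~\ref{thm:main}, term (c) is handled without any symmetry on $\omega^2$. The price is that the corrector for it (obtained by solving the fourth-order part against $\overline{\p_x^{m+2}Q}$, which oscillates with frequency $\pm 2a\xi^4$) leaves commutator errors involving $\p_x^2$ of the coefficients applied to $\p_x^{m-1}Q$ and $\p_x^{m+1}$ factors. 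Bounding these in $L^\infty$ requires $\p_x^{3}Q\in L^\infty$ from the $H^{m-1}$-type level, which is where $m\geqslant5$ comes from.

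\textbf{The key step, and the main obstacle,} is to show that under (A1)+(D1) the bilinear form $\sum\omega^{2,j}_{p,q,r}\overline{\p_x^{m+2}Q_p}Q_qQ_r\overline{\p_x^mQ_j}$ is symmetric in the pair $(p,j)$ of highest-order indices. This symmetry lets one write its real part as $\frac12\p_x$ of a product of $\overline{\p_x^{m+1}Q}$ and $\overline{\p_x^mQ}$ terms (a $Q\cdot Q$-weighted complex bilinear expression) plus lower order. After integration by parts, the dangerous part then becomes a term $\langle \p_x^{m+1}\overline{Q}\,(QQ),\p_x^{m}Q\rangle$ with a symmetric coefficient. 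That term is removed by a lower-order energy correction $\operatorname{Re}\langle \overline{\p_x^{m}Q}\,(QQ),\p_x^{m-1}Q\rangle$-type without further derivatives on the coefficients.

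Similarly, (D2) symmetrizes the $\omega^3$ term in the index pair $(q,j)$ (here $\overline{\p_xQ_q}$ plays the role of the high-order conjugate factor). This makes its non-divergence part vanish directly and avoids the corresponding corrector. What I have to check is that, after these simplifications, every error term contains at most $\p_x^{2}Q$ in $L^\infty$, hence is bounded by $C(\|Q\|_{H^4})\|Q\|_{H^m}^2$. The estimate $\frac{d}{dt}E_m\le C(\|Q\|_{H^4})E_m$ would then close with $T=T(\|Q_0\|_{H^4})$.

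The remaining steps are standard repetitions of the proof of Theorem~\ref{thm:main} at level $m=4$. Uniqueness uses an $L^2$-level (or $H^2$-level) modified energy for the difference of two solutions, with the same symmetries to cancel the loss. Continuous dependence follows by the Bona--Smith method with frequency-truncated data, using the $H^4$-dependent lifespan. I expect the main effort to be the bookkeeping in the key step: verifying that (A1), (D1), (D2), together with (C1)--(C2), produce exact symmetry of the relevant coefficient matrices, so that no $\p_x^3Q$ in $L^\infty$ survives.
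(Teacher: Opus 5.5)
Your algebraic observations are the right ones, but you apply them to the wrong term, and the step you call the key one fails. Under (A1)+(D1) the matrix $P_2(Q)=\bigl(\sum_{p,q}\omega^{2,j}_{r,q,p}Q_pQ_q\bigr)_{jr}$ multiplying $\overline{\p_x^2U}$ (with $U=\p_x^mQ$) is indeed symmetric. But symmetry is exactly the case in which this term cannot be removed by integration by parts:
\[
\operatorname{Re}\langle P_2\overline{\p_x^2U},U\rangle
=-\operatorname{Re}\int_{\TT}\overline{\p_xU}^{\top}P_2\,\overline{\p_xU}\,dx
-\operatorname{Re}\int_{\TT}\overline{U}^{\top}(\p_xP_2)\,\overline{\p_xU}\,dx .
\]
The first integral sees only $\frac12(P_2+P_2^{\top})$ and is a genuine one-derivative loss, not a divergence. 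This is already true for $n=1$, where (D1)--(D2) are vacuous and $P_2=\omega^{2,1}_{1,1,1}Q^2$ is trivially symmetric.

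Your proposed corrector does not repair this. For symmetric $S$, $\operatorname{Re}\langle S\overline{\p_x^mQ},\p_x^{m-1}Q\rangle$ equals $-\frac12\operatorname{Re}\int_{\TT}\overline{\p_x^{m-1}Q}^{\top}(\p_xS)\overline{\p_x^{m-1}Q}\,dx$. It can therefore only generate top-order terms with coefficient $\p_xS$, never $P_2$ itself, which has no periodic antiderivative in general. This symmetric part (in the $2n$-component picture, $\mathcal{R}_H^{\mathrm{off}}$) must still be removed by the dispersive gauge $M^{-1}\Lambda(Q)\overline{\p_x^{m-2}Q}$. That gauge uses $M=aI_n$ through $[\mathcal{M},\mathcal{L}]=\mathcal{R}_H^{\mathrm{off}}$. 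Since $\Lambda$ depends only on $Q$, $\p_t\Lambda=O(|Q||\p_tQ|)$ involves at most $\p_x^4Q$, so this corrector is harmless at $m=4$. Your diagnosis of where $m\geqslant 5$ enters is therefore also wrong.

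What forces $m\geqslant5$ in Theorem~\ref{thm:main} is the order $-3$ corrector $M^{-1}\Lambda_{1,m}(Q,\p_xQ)\overline{\p_x^{m-3}Q}$. It is introduced for the skew-symmetric first-order coefficient of $\overline{\p_xU}$, namely the block $\frac12\{B_m-B_m^{\top}-\p_x(P_2-P_2^{\top})\}$ of $\mathcal{K}_{m,S}^{\mathrm{off}}$. Because $\Lambda_{1,m}$ contains $\p_xQ$, its time derivative contains $\p_t\p_xQ\sim\p_x^5Q$, which is not controlled at $m=4$. The missing step is to compute this coefficient,
\[
\Bigl((m-1)\sum_{p,q}(\omega^{2,j}_{r,q,p}-\omega^{2,r}_{j,q,p})\,\p_x(Q_pQ_q)+\sum_{p,q}(\omega^{3,j}_{p,r,q}-\omega^{3,r}_{p,j,q})\,\p_xQ_p\,Q_q\Bigr)_{jr},
\]
and observe that (A1)+(D1) and (D2) make it vanish for every $m$. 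This includes $m=2$, which is needed for the difference and uniqueness energies. Then $\Lambda_{1,m}=\Lambda_{1,2}=0$, the remaining gauges $\Lambda$ and $\Lambda_{2,m}$ depend only on $Q$, and the argument closes for $m\geqslant4$. Your symmetry of $P_2$ and of the $\omega^3$ coefficient should be fed into this first-order $\overline{\p_xU}$ term. There, symmetry of the coefficient $B$ does make $\operatorname{Re}\langle B\overline{\p_xU},U\rangle$ a divergence plus a harmless zeroth-order term. It cannot substitute for the second-order gauge.
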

This also extends the previous results in \cite{Onodera2025} 
on local well-posedness in $H^m(\TT;\CC^n)$ for 
$m\geqslant 4$  under 
(A1)--(A2) and (B1)--(B6). 
Indeed, (D1) follows from (A1) and (B3), 
and (D2) follows from (B4) and (B5), 
but their converses do not hold in general. 
\par 
Our idea of the proof of Theorem~\ref{thm:main}
and Corollary~\ref{cor:cor}
is the combination of the diagonalization technique 
and  estimates for modified energies 
via some transformations of gauge-type:  
We first derive $n$-component systems 
satisfied by higher-order derivatives $U=\p_x^mQ$
and extend them to 
$2n$-component systems for 
$(U\ \overline{U})^{\top}$. 
We then perform a diagonalization procedure to 
eliminate seemingly bad lower-order terms 
in block-off-diagonal part  
by introducing a transformation of gauge-type  
which is formally defined as a sum of the identity and 
two pseudodifferential operators of order $-2$ and 
$-3$. 
\par 
Within this framework, 
at the level of higher-order derivatives, terms involving 
$\overline{\p_x^{m+1}U}$ and $\overline{\p_x^{m+2}U}$ 
may also appear to lead to a loss of derivatives 
when viewed within the classical energy method. 
However, these terms are in fact harmless due to 
their conjugate structure. 
The diagonalization procedure clarifies this point 
by reformulating the system so that 
such contributions appear in the block-off-diagonal part.
\par 
The remaining terms responsible for the derivative loss
after diagonalization are treated 
by an additional transformation of gauge-type.  
The structural conditions on the nonlinear terms 
play a crucial role in this step: 
(C1) eliminates the Hermitian part of the 
second-order 
diagonal terms and (C2) converts 
the skew-Hermitian part 
of the first-order diagonal terms 
into divergence-type forms. 
This structure enables cancellation of 
the problematic contributions 
in the energy estimates, 
thereby leading to the construction of 
suitable modified energies to close the estimates 
at the $H^m$ level.
\par 
Although the required gauge-type transformations 
can in principle be identified directly, 
without resorting to the extension to a $2n$-component 
system or a diagonalization procedure, 
the diagonalization approach provides a systematic 
and transparent way to determine them 
and to reveal the underlying mechanism of derivative loss.
\par 
More concretely, 
our proof of Theorem~\ref{thm:main} 
and Corollary~\ref{cor:cor}
is based on the above idea and
a fourth-order parabolic regularization of the 
Bona-Smith type \cite{BS}, 
the latter of which is a standard tool to show the 
continuous dependence of the solution with respect to
the initial data in $H^m$-topology. 
Uniqueness of the solution 
is proved by estimating an $H^2$-energy for the difference 
of two solutions 
with gauge-type transformations implicitly, 
where we explicitly adopt a trick of 
the modified energy method 
established by Kwon \cite{Kwon} and Segata \cite{Segata2012}.  
\par 
The assumptions $m\geqslant 5$ in Theorem~\ref{thm:main} and 
$m\geqslant 4$ in Corollary~\ref{cor:cor}  come from the requirement 
on the gauge-type transformations to work. 
In particular, 
the assumption $m\geqslant 5$ comes from the 
requirement that the pseudodifferential operator of order $-3$
to form the gauge-type transformation 
works to eliminate the 
seemingly bad first-order terms with skew-Hermitian 
coefficient matrix in block-off-diagonal part. 
It is to be noted that 
the use of the pseudodifferential operator of order $-3$ 
is not required to prove Corollary~\ref{cor:cor} 
and the previous local well-posedness 
results \cite{Onodera2025} 
in $H^m(\TT;\CC^n)$ for $m\geqslant 4$, 
by the absence of such bad lower order terms.  
\par 
We mention that the above idea and the choice 
of the gauge-type transformations 
are inspired by the $L^2(\TT)$-well-posedness 
theory for 
linear fourth-order dispersive equations for 
$\CC^n$-valued functions established 
by Mizuhara \cite{Mizuhara} ($n=1$) and 
Chihara \cite{Chihara2015} ($n=2$).  
In addition, the diagonalization technique was originated 
by Chihara \cite{Chihara1995} to study a class of 
Schr\"odinger-type equations 
with nonlinear terms including the complex conjugate of the first-order spatial derivative of the unknown function. 
The technique of observing the structure of equations 
by coupling their complex conjugate 
has been also applied or discussed 
in some situations for 
nonlinear Schr\"odinger-type equations 
and for more general nonlinear second-order dispersive type equations. See, e.g., \cite{Chihara1995_2, 
KPV1998, KPV2004} and references therein.   
\par 
Returning to the nonlinear problem \eqref{eq:IVP}, 
we note that 
the assumption $M=aI_n$ plays a role to ensure that 
each of the gauge-type transformation and the principal part $i\,M\p_x^4$ 
commute with each other. 
Without the assumption,  
any nonvanishing commutator generally causes a loss of either one or two derivatives, 
which prevents our proof strategy from working. 
However, it is turned out that, 
even if $M=\diag(a_1,\ldots,a_n)$ for 
$(a_1,\ldots,a_n) \in (\mathbb{R} \setminus \{0\})^n$ 
without $a_1=\cdots=a_n$, 
our proof strategy remains valid under the additional conditions (E1)-(E3): 
\begin{align}
	\tag{E1}
	\omega^{2,j}_{r,q,p} =- \omega^{2,r}_{j,q,p} 
	\quad &\text{unless $j=r$ for all} \quad p,q,r,j\in 
	\left\{1,\ldots,n\right\}, 
	\\
	\tag{E2}
	\omega^{1,j}_{r,q,p}
	=
	\overline{
		\omega^{1,r}_{j,p,q}
	}
	\quad &\text{unless $j=r$ for all} \quad p,q,r,j\in 
	\{1,\ldots,n\},
	\\
	\tag{E3}
	\omega^{3,j}_{r,q,p}
	-2\overline{\omega^{4,r}_{j,q,p}}
	=0
	\quad &\text{unless $j=r$ for all} \quad p,q,r,j\in 
	\{1,\ldots,n\}.
\end{align}
\begin{corollary}
\label{cor:cor2}
Assume that $M=\diag(a_1,\ldots,a_n)$ for some  
$(a_1,\ldots,a_n) \in (\mathbb{R} \setminus \{0\})^n$ 
and $F(Q,\p_xQ,\p_x^2Q)$ satisfies 
all the conditions  
\emph{(A1)--(A2)}, \emph{(C1)--(C2)}, 
\emph{(D1)--(D2)} and \emph{(E1)-(E3)}. 
Then \eqref{eq:IVP}  
 is time-locally well-posed in $H^m(\TT;\mathbb{C}^n)$ 
 for any integer $m\geqslant 4$.
\end{corollary}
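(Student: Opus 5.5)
The proof of Corollary~\ref{cor:cor2} will follow the same scheme as Theorem~\ref{thm:main} and Corollary~\ref{cor:cor}: parabolic regularization of Bona--Smith type, uniform $H^m$ bounds via modified energies, a compactness and limiting argument for existence, an $H^2$-type modified energy for uniqueness, and the Bona--Smith argument for continuous dependence. The only change is in the gauge-type transformations. When $M=\diag(a_1,\ldots,a_n)$ is not scalar, these transformations no longer commute with $iM\p_x^4$, and I will use (E1)--(E3) to make the offending coefficient matrices diagonal, so that the commutators vanish.

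\textbf{Step 1: diagonalization at the $H^m$ level.} I will set $U=\p_x^mQ$, derive the system for $(U\ \overline{U})^{\top}$, and isolate its second- and first-order coefficient matrices. These come from $\omega^{1}$ and $\omega^{2}$ (second order), and from $\omega^{3}$, $\omega^{4}$ together with $\p_x$ of the second-order ones (first order). The block-off-diagonal part involves $\overline{\p_x^2U}$ with coefficient $\sum_{q,p}\omega^{2,j}_{r,q,p}Q_qQ_p$, indexed by $(j,r)$. It is removed by $I+\Lambda$, where $\Lambda$ has order $-2$. The key observation is that $\overline{U}$ evolves with $-iM\p_x^4$, so the homological equation involves $a_j+a_r$ rather than a commutator. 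Hence $\Lambda\sim \tfrac{1}{2}(\text{coeff})\,(a_j+a_r)^{-1}\p_x^{-2}$ wherever $a_j+a_r\neq0$. Where $a_j+a_r=0$ with $j\ne r$, I need the coefficient to vanish or be harmless. Here I plan to use (E1) combined with (D1): (D1) gives symmetry in $(j,r)$ and (E1) antisymmetry, so the coefficient vanishes for $j\neq r$. The off-diagonal block is then diagonal in the component index, and only $j=r$ survives, where $2a_j\neq0$. Since (D1)--(D2) and (A1) hold, Corollary~\ref{cor:cor} already shows that no order $-3$ operator is needed, so $m\ge4$ suffices.

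\textbf{Step 2: the diagonal-block gauge.} After diagonalization, the block-diagonal part contains $A_2(Q)\p_x^2U$ and $A_1(Q)\p_xU$. (C1) makes the Hermitian part of $A_2$ vanish, and (C2) makes the skew-Hermitian part of $A_1$ a divergence form. In the proof of Corollary~\ref{cor:cor}, the remaining bad terms are killed by a gauge $I+\Phi$ with $\Phi$ of order $-1$ or $-2$ built from these matrices, and its commutator with $iM\p_x^4$ loses derivatives unless $\Phi$ is diagonal in component indices. Conditions (E2) and (E3) state exactly that the $(j,r)$ entries with $j\ne r$ of the Hermitian-type combination of $\omega^1$ and of the combination $\omega^3-2\overline{\omega^4}$ vanish. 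Combined with (C1) and (C2), this makes the entries relevant to the gauge diagonal. So $\Phi$ is diagonal and $[\Phi,iM\p_x^4]$ reduces to lower order, essentially the commutator of multiplication by a function with $\p_x^4$ inside each component, which is no worse than in the scalar case.

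\textbf{Step 3: energy estimates and the rest.} With these diagonal gauges I will form the modified energy $\|(I+\Lambda)(I+\Phi)\p_x^mQ\|_{L^2}^2+\|Q\|_{H^{m-1}}^2$. It is equivalent to $\|Q\|_{H^m}^2$ for short times, and its derivative is bounded by $C(\|Q\|_{H^4})$ times itself. The same computation applies to the regularized equation uniformly in the regularization parameter. Uniqueness and continuous dependence then follow verbatim from the argument for Corollary~\ref{cor:cor}, with the $H^2$ difference energy using the same diagonal gauges.

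\textbf{Main obstacle.} The hard part will be the bookkeeping in Step 2: checking that every entry of the gauge matrices with $j\ne r$ really is a linear combination of the quantities that (E1)--(E3), together with (C1)--(C2) and (D1)--(D2), force to vanish, including the first-order terms generated by $\p_x$ acting on the second-order coefficients. I would first verify this carefully in the case $n=2$ before writing the general index computation.
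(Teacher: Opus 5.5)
Your proposal is correct and follows essentially the paper's argument. The proof reduces to checking that $\Lambda$ (via (E1)), $\Lambda_{1,m}$ and $\Lambda_{1,2}$ (which vanish by (A1), (D1)--(D2), removing the need for $m\geqslant 5$), and $\Lambda_{2,m}$, $\Lambda_{2,2}$ (via (E2)--(E3) applied to $D_m$) are entry-wise diagonal and hence commute with $M$, after which the framework of Theorem~\ref{thm:main} and Corollary~\ref{cor:cor} runs unchanged. Two small slips: (E2) kills the off-diagonal entries of the skew-Hermitian combination $\omega^{1,j}_{r,q,p}-\overline{\omega^{1,r}_{j,p,q}}$ (the one entering $D_m$ with weight $m-1$), not a Hermitian one; and your use of (D1) together with (E1) to make all of $P_2$ diagonal is valid but stronger than needed, since only $P_2+P_2^{\top}$ enters $\Lambda$ and (E1) alone makes it diagonal.
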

Roughly speaking, these conditions ensure 
that all the coefficient part of the gauge-type transformations 
are nothing but entry-wise diagonal matrix-valued functions  
whose off-diagonal entries all vanish.  
We note that a sufficient condition 
based on the same strategy 
has been discussed in \cite{Onodera2025}. 
The above combined conditions slightly extend the sufficient condition in \cite{Onodera2025}. 
However, it seems that the analysis of the case 
$M\neq a\,I_n$ is still 
far reaching. 
\par 
This paper mainly focuses on nonlinearities of the form $F$ for simplicity and for considering geometric or physical background 
mentioned in Remark~\ref{rem:6211}. 
However, we add a comment that the gauge invariance of the 
nonlinear terms is not essential. 
Indeed, the same arguments can be extended to 
more general nonlinear terms 
including 
$G(Q, \p_x Q, \p_x^2 Q)= (G_1, \dots, G_n)^{\top}(Q, \p_x Q, \p_x^2 Q)$ whose $j$-th component is defined by 
\begin{align}
	G_j(Q, \p_x Q, \p_x^2 Q)
	&=  
	\sum_{p,q,r=1}^n
	\mu^{1,j}_{p,q,r}\, \p_x^2 Q_p\, Q_q\, Q_r
	+ 
	\sum_{p,q,r=1}^n
	\mu^{2,j}_{p,q,r}\, \p_x Q_p\, \p_x Q_q\, Q_r
	\notag\\
	&\quad 
	+\sum_{p,q,r=1}^n
	\mu^{3,j}_{p,q,r}\, \p_x^2 Q_p\, \overline{Q_q}\, \overline{Q_r}
	+\sum_{p,q,r=1}^n
	\mu^{4,j}_{p,q,r}\, \overline{\p_x^2 Q_p}\, \overline{Q_q}\, Q_r
	\notag\\
	&\quad
	+ 
	\sum_{p,q,r=1}^n
	\mu^{5,j}_{p,q,r}\, \overline{\p_x Q_p}\, \overline{\p_x Q_q}\, Q_r
	+ 
	\sum_{p,q,r=1}^n
	\mu^{6,j}_{p,q,r}\, \p_x Q_p\, \overline{\p_xQ_q}\, \overline{Q_r}
	\notag\\
	&\quad 
	+\sum_{p,q,r=1}^n
	\mu^{7,j}_{p,q,r}\, \overline{\p_x^2 Q_p}\, \overline{Q_q}\, \overline{Q_r}
	+
	\sum_{p,q,r=1}^n
	\mu^{8,j}_{p,q,r}\, \overline{\p_x Q_p}\, \overline{\p_x Q_q}\, \overline{Q_r}, 
	\nonumber
\end{align}
where all coefficients 
$\mu^{k,j}_{p,q,r} \in \mathbb{C}$ 
for $p,q,r,j\in \{1,\ldots, n\}$ and $k\in \{1,\ldots,8\}$
are complex constants.
To state results for the corresponding initial value problem, 
we introduce the conditions 
\begin{align}
\tag{A3}
\mu^{1,j}_{p,q,r} = \mu^{1,j}_{p,r,q} 
\quad \text{for all} \quad p,q,r,j\in 
\{1,\ldots,n\}, 
\\
\tag{A4}
\mu^{2,j}_{p,q,r} = \mu^{2,j}_{q,p,r} 
\quad \text{for all} \quad p,q,r,j\in 
\{1,\ldots,n\}, 
\\
\tag{A5}
\mu^{3,j}_{p,q,r} = \mu^{3,j}_{p,r,q} 
\quad \text{for all} \quad p,q,r,j\in 
\{1,\ldots,n\}, 
\end{align}
which do not restrict the generality even if we impose. 
Furthermore we impose additional essential conditions 
\begin{align}
\tag{C3}
\mu^{1,j}_{r,q,p} 
&=-\overline{\mu^{3,r}_{j,q,p}} 
\quad \text{for all} \quad p,q,r,j\in 
\{1,\ldots,n\}, 
\\
\tag{C4}
2\mu^{2,j}_{r,q,p} 
-\overline{
\mu_{j,q,p}^{6,r}
}
&= 
2\mu^{2,j}_{r,p,q} 
-\overline{
\mu_{j,p,q}^{6,r}
}
\quad \text{for all} \quad p,q,r,j\in 
\{1,\ldots,n\}.
\end{align}
\begin{theorem}
\label{thm:main2}
Assume that $M=aI_n$ for some 
$a\in \mathbb{R} \setminus \{0\}$, 
$F(Q,\p_xQ,\p_x^2Q)$ satisfies 
all the conditions 
\emph{(A2)} and \emph{(C1)--(C2)}, 
and that $G(Q,\p_xQ,\p_x^2Q)$ satisfies 
all the conditions 
\emph{(A3)--(A5)} and \emph{(C3)--(C4)}.  
Then \eqref{eq:IVP} 
with $F(Q,\p_xQ,\p_x^2Q)$ replaced by 
$F(Q,\p_xQ,\p_x^2Q)+G(Q,\p_xQ,\p_x^2Q)$  
 is time-locally well-posed in $H^m(\TT;\mathbb{C}^n)$ 
 for any integer $m\geqslant 5$. 
\end{theorem}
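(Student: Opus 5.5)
The proof of Theorem~\ref{thm:main2} will follow the scheme used for Theorem~\ref{thm:main}. The only new ingredient is tracking how the terms of $G$ enter the $2n$-component system for $(U\ \overline{U})^{\top}$ with $U=\p_x^mQ$. First, apply $\p_x^m$ to the equation with $F+G$ and keep only the terms containing $\p_x^{m+2}Q$, $\p_x^{m+1}Q$ or their conjugates; all other terms are bounded in $L^2$ by $C(\|Q\|_{H^m})\|Q\|_{H^m}$ once $m\geqslant 5$. The terms of $G$ of types $\mu^1,\mu^2$ (no conjugates on the top factor) contribute to the block-diagonal part for $U$. The terms of types $\mu^3,\mu^6$ carry $\p_x^{m+2}Q_p$ or $\p_x^{m+1}Q_p$ with a conjugate coefficient $\overline{Q_q}\,\overline{Q_r}$ or $\overline{Q_r}$, so they also contribute to the block-diagonal part. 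The types $\mu^4,\mu^5,\mu^7,\mu^8$ contain $\overline{\p_x^{m+2}Q}$ or $\overline{\p_x^{m+1}Q}$, so they go into the block-off-diagonal part. I will write the second- and first-order diagonal coefficient matrices $A_2(Q)$ and $A_1(Q)$ explicitly. These matrices now collect the $\omega^1,\omega^3,\omega^4$ contributions together with the new $\mu^1,\mu^3$ contributions at second order and the $\mu^2,\mu^6$ contributions at first order. The symmetrizations (A3)--(A5) let the coefficients be written in the index-symmetric form used for $F$.

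Second, I will carry out the diagonalization step exactly as before. The gauge operator is $I+\Lambda_{-2}+\Lambda_{-3}$, where $\Lambda_{-2}$ and $\Lambda_{-3}$ have block-off-diagonal symbols built from the off-diagonal coefficients divided by the principal symbol $ia\xi^4$. This operator removes the off-diagonal second- and first-order terms, which now include those from $\mu^4,\mu^5,\mu^7,\mu^8$. Since the principal part is $ia I_{2n}$ times the diagonal $\pm\p_x^4$ pattern, commutators with it cost nothing beyond what the construction for Theorem~\ref{thm:main} already handles. The coefficients of $\Lambda_{-3}$ involve $Q$ and up to $\p_x Q$, which is why the requirement $m\geqslant 5$ is unchanged. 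No structural condition is needed on the off-diagonal $G$-terms, exactly as none was needed on $\omega^2$.

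Third, and this is the main obstacle, I need the diagonal part to have the cancellation structure used in the modified energy. The Hermitian part of $A_2(Q)$ must vanish. Its $(j,r)$ entry is roughly $\sum_{p,q}(\omega^{1,j}_{r,q,p}\overline{Q_q}Q_p+\mu^{1,j}_{r,q,p}Q_qQ_p+\mu^{3,j}_{r,q,p}\overline{Q_q}\,\overline{Q_p})$. The $\omega$-part is skew by (C1). The $\mu$-parts pair up: the conjugate-transpose of the $\mu^1$-term is $\overline{\mu^{1,r}_{j,q,p}}\,\overline{Q_q}\,\overline{Q_p}$, and this cancels against the $\mu^3$-term by (C3). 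I will verify this carefully, including the index placement conventions. At first order, the skew-Hermitian part of $A_1(Q)$, after subtracting the $\p_x A_2$-type correction coming from the second-order term, must take the form $\p_x$ of a matrix function. The $\omega$-contribution does so by (C2). For the $\mu$-part, I expect the combination $2\mu^{2,j}_{r,q,p}-\overline{\mu^{6,r}_{j,q,p}}$ to multiply $\p_x Q_q\,Q_p$. By (C4) it is symmetric in $(q,p)$, so it equals $\tfrac12\p_x(Q_qQ_p)$ times a constant, which gives the divergence form. The factor $2$ will be checked against the coefficient generated by the Leibniz rule $\binom{m}{1}$-type bookkeeping.

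Finally, with this structure the second gauge transformation $\exp$-type (order $0$ multiplication by $\p_x^{-1}$ of the divergence-form coefficient, as in Theorem~\ref{thm:main}) yields a modified $H^m$ energy satisfying $\frac{d}{dt}E\leqslant C(\|Q\|_{H^m})E$. After that, the Bona--Smith parabolic regularization, the $H^2$ difference estimate for uniqueness, and continuous dependence are carried over verbatim from the proof of Theorem~\ref{thm:main}.
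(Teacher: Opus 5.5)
Your Step~3 is the whole content of the paper's proof. There, Theorem~\ref{thm:main2} follows by invoking Proposition~\ref{pro:framework} and checking $(\widetilde{C1})$--$(\widetilde{C2})$ for $G$, and your computation is the same one. (C3) cancels both the $Q_pQ_q$ and the $\overline{Q_p}\,\overline{Q_q}$ parts of $P_1+P_1^*$. (A4)+(C4) make the coefficient $2\mu^{2,j}_{r,p,q}-\overline{\mu^{6,r}_{j,p,q}}$ of $\p_xQ_p\,Q_q$ in $A_m-A_m^*$ symmetric in $(p,q)$, so that part is $\tfrac12\p_x$ of a quadratic form. The $\overline{\p_xQ_p}\,\overline{Q_q}$ part follows by conjugating (C4) and swapping $j,r$.

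Two bookkeeping corrections. The factor $2$ comes from $\p_x^m$ landing on either slot of $\p_xQ_p\,\p_xQ_q$ together with (A4), not from a Leibniz $\binom{m}{1}$ count. The factor $m$ instead multiplies the first-order contributions of $\mu^1,\mu^3$ to $A_m$, which your Step~1 list omits. These are $m\,\p_x\bigl(\mu^{1,j}_{r,q,p}Q_pQ_q+\mu^{3,j}_{r,q,p}\overline{Q_p}\,\overline{Q_q}\bigr)$, already in divergence form. Also, $\mu^6$ feeds $B_m$ through $\overline{\p_xU_q}$; as you say, that needs no condition.

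The one step that would fail as written is the last. An $\exp$-type order-$0$ gauge is the second-order (Schr\"odinger) device. Against fourth-order dispersion one has $[ia\p_x^4,e^{\Theta}]=4ia(\p_xe^{\Theta})\p_x^3+\cdots$. So the gauge trades a first-order term for a variable-coefficient third-order one, which loses derivatives on $\TT$.

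What Theorem~\ref{thm:main}, i.e.\ the framework you are invoking, actually uses is the block-diagonal order $-2$ correction $-\mathcal{L}_{2,m}(Q)\p_x^{-2}$. Here $\Lambda_{2,m}=\tfrac14D_m(Q)\,iI_n$, and $\p_x(D_m)$ is the divergence-form block from $(\widetilde{C2})$. Because $M=aI_n$, this commutes with $\mathcal{M}$, so its commutator with $\mathcal{M}\p_x^4$ starts at first order: $4\mathcal{M}\p_x(\mathcal{L}_{2,m})\p_x=-\mathcal{K}^{\mathrm{diag}}_{m,S}\p_x$ modulo lower order. This is exactly why divergence form is the right requirement.

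One further small point: the energy inequality should read $\frac{d}{dt}E\leqslant C(\|Q\|_{H^5})E$, not $C(\|Q\|_{H^m})E$. Tame constants are what give the Bona--Smith bounds $\|Q^\nu\|_{H^{m+j}}\lesssim\nu^{-j}$ on a common interval, and this is automatic for cubic $G$. With the gauge replaced by the order $-2$ correction, your argument is complete and coincides with the paper's.
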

This is consistent with the observation that the nonlinear
terms generating contributions of the form
$\overline{\p_x^{m+1}U}$ and
$\overline{\p_x^{m+2}U}$ for $U=\p_x^mQ$
are revealed to be essentially harmless
through the diagonalization technique,
so that no additional structural conditions are imposed
on such interactions even in the extended setting.
We can also present sufficient conditions for 
local well-posedness in $H^m(\TT;\CC^n)$ 
for $m\geqslant 4$ 
which correspond to Corollaries~\ref{cor:cor} 
and \ref{cor:cor2}. 
Although no additional difficulty arises in 
proving them, 
we do not state these corresponding results
for simplicity.
\par 
In addition, the inclusion of nonlinear terms involving $G$ suggests a wider range of applicability of our approach. 
In particular, it indicates the possibility of obtaining local well-posedness results in the case 
$M=\diag(a_1,\ldots,a_n)$ with $|a_1|=\cdots=|a_n|\neq 0$. 
To illustrate this point, consider the system \eqref{eq:IVP} with vanishing $G$, 
and assume $M_S=0_n$ and 
$M=\diag(a_1,\ldots,a_n)$ with $a_1=\cdots=a_k=1$ and $a_{k+1}=\cdots=a_n=-1$ for some 
$1\leqslant k<n$. 
Then, by setting $R_j=Q_j$ for $1\leqslant j\leqslant k$ and 
$R_j=\overline{Q_j}$ for $k+1\leqslant j\leqslant n$, 
the system can be transformed into a system for $R=(R^1,\ldots,R^n)$ 
with the principal part given by 
$\p_t R - i I_n \p_x^4 R$,
while the nonlinear terms are transformed into a form
covered by the framework of Theorem~\ref{thm:main2}.
This observation suggests that appropriate structural conditions on the coefficients 
$\omega_{p,q,r}^{k,j}$ can lead to local well-posedness in this setting as well. 
However, carrying out this approach in full generality requires a careful reformulation of the nonlinear terms, 
and remains a nontrivial task. 
\par 
The rest of the paper is organized as follows:  
In Section~\ref{sec:proof}, 
we first explain the idea behind our approach,  
specifying two essential structural conditions.  
We next present a general 
framework to prove local well-posed in $H^m(\TT;\CC^n)$ 
for $m\geqslant 5$ based on the idea. 
In Section~\ref{sec:A2C1C2}, we show that 
the set of the structural conditions (A2) and (C1)--(C2) 
satisfies the assumptions for the above framework 
in Section~\ref{sec:proof} to work, which completes 
the proof of Theorem~\ref{thm:main}.   
In Section~\ref{sec:linear}, 
we study the initial value problem for a class of linear 
dispersive systems 
investigated by Chihara \cite{Chihara2015}. 
This will help to understand the meaning of 
the structural conditions (A1) and (C1)--(C2).  
In Section~\ref{sec:LWP4}, 
we prove Corollary\ref{cor:cor}.  
In Section~\ref{sec:M}, 
we discuss the case $M\neq a\,I_n$ and prove 
Corollary~\ref{cor:cor2}.  
Finally in Section~\ref{sec:G}, 
we show that 
the set of the structural conditions (A2)--(A5) 
and (C1)--(C4) 
satisfies the assumptions for the above framework 
in Section~\ref{sec:proof} to work, which completes 
the proof of Theorem~\ref{thm:main2}.  
\section{Proof under the key conditions}
\label{sec:proof}
In this section, 
we first derive systems satisfied by higher-order 
derivatives of solutions 
and perform a reduction to a form suitable 
for energy estimates, 
where two key conditions ($\widetilde{C1}$)-($\widetilde{C2}$) 
are specified. 
We next present a general framework to prove 
local well-posedness 
of the initial value problem \eqref{eq:IVP} in 
$H^m(\TT;\CC^n)$ for $m\geqslant 5$ 
under ($\widetilde{C1}$)-($\widetilde{C2}$). 
\subsection{Idea of the proof: 
Reduction of the system}
\label{subsec:reduction}
Let $Q=Q(t,x): I_T\times \TT\to \CC^n$ 
be a sufficiently smooth solution to \eqref{eq:IVP}, 
where $I_T\subset \RR$ is a time-interval on which 
$Q$ exists. 
We define $U := \p_x^m Q$ for integer $m\geqslant 5$. 
Applying $\p_x^m$ to \eqref{eq:IVP}, 
we obtain
\begin{align}
&(\p_t - i M \p_x^4 - M_S \p_x^2)U
\nonumber
\\
&= 
P_1(Q)\p_x^2 U
+
P_2(Q)\overline{\p_x^2 U}
+ A_m(Q,\p_xQ)\p_x U 
+ B_m(Q,\p_xQ)\overline{\p_x U} 
+ \text{l.o.t.}, 
\label{eq:U_eq}
\end{align}
where:
\begin{itemize}
\item $P_1(Q)$ and $P_2(Q)$ 
are $n \times n$ 
matrix-valued functions whose entries are 
of order $|Q|^2$,
\item 
$A_m(Q,\p_x Q)$ and $B_m(Q,\p_x Q)$ are 
$n \times n$ matrix-valued functions depending on $m$, 
whose entries are of order $|Q|\,|\p_x Q|$.
\item ``l.o.t.'' denotes lower-order terms, 
which involve spatial derivatives of $Q$ up to order $m$ 
and can be controlled without derivative loss.
\end{itemize}
The first four terms on the right-hand side of \eqref{eq:U_eq} 
involve spatial derivatives of $U$, 
and therefore cannot be directly controlled by the $H^m$ 
norm of $Q$ via the classical energy method.
\par 
We next set 
\[ 
\mathcal{Q}:=(Q\ \ \overline{Q})^{\top},
\quad  
\mathcal{U}:=(U\ \ \overline{U})^{\top}
\] 
and rewrite the system as
\begin{equation}
\label{eq:matrix_system}
\left(\p_t
-\mathcal{M} \p_x^4
- \mathcal{R}(Q) \p_x^2
- \mathcal{R}_m(Q,\p_xQ) \p_x
\right) \mathcal{U}
= \text{l.o.t.},
\end{equation}
where 
\[
\mathcal{M}
=
\begin{pmatrix}
iM & 0_n \\
0_n & \overline{iM}
\end{pmatrix}, 
\quad 
\mathcal{R}(Q) 
=
\begin{pmatrix}
M_S+P_1(Q) & P_2(Q) \\
\overline{P_2(Q)} & \overline{M_S+P_1(Q)}
\end{pmatrix},
\]
and 
\[
\mathcal{R}_m(Q,\p_xQ)
=
\begin{pmatrix}
A_m(Q,\p_xQ) & B_m(Q,\p_xQ) \\
\overline{B_m(Q,\p_xQ)} & \overline{A_m(Q,\p_xQ)}
\end{pmatrix}.
\]
We next decompose the $2n\times 2n$ matrices 
$\mathcal{R}=\mathcal{R}(Q)$ 
and 
$\mathcal{R}_m=\mathcal{R}_m(Q,\p_xQ)$
into Hermitian and skew-Hermitian parts:
\[
\mathcal{R}
= \mathcal{R}_H + \mathcal{R}_S,
\quad
\mathcal{R}_H^* = \mathcal{R}_H, \quad
\mathcal{R}_S^* = - \mathcal{R}_S, 
\]
\[
\mathcal{R}_m
= \mathcal{R}_{m,H} + \mathcal{R}_{m,S},
\quad
\mathcal{R}_{m,H}^* = \mathcal{R}_{m,H}, \quad
\mathcal{R}_{m,S}^* = - \mathcal{R}_{m,S}, 
\]
Furthermore, we split
\[
\mathcal{R}_H
= \mathcal{R}_H^{\mathrm{diag}}
+ \mathcal{R}_H^{\mathrm{off}}, 
\quad 
\mathcal{R}_S
= \mathcal{R}_S^{\mathrm{diag}}
+ \mathcal{R}_S^{\mathrm{off}}, 
\]
\[
\mathcal{R}_{m,H}
= \mathcal{R}_{m,H}^{\mathrm{diag}}
+ \mathcal{R}_{m,H}^{\mathrm{off}}, 
\quad 
\mathcal{R}_{m,S}
= \mathcal{R}_{m,S}^{\mathrm{diag}}
+ \mathcal{R}_{m,S}^{\mathrm{off}},  
\]
where in each case the first term is the block-diagonal part 
and the second is the block-off-diagonal part, 
with respect to the $2\times 2$ block structure, 
in which each block is an $n\times n$ matrix.
\par 
The key idea is to eliminate the problematic terms involving $\p_x^2 \mathcal{U}$ or $\p_x\mathcal{U}$ 
by suitable gauge-type transformations. 
The idea is explained below in two steps.
\par 
First, we introduce a transformed variable 
\begin{equation}
\label{eq:V_def}
\mathcal{V}
= \left(I + \mathcal{L}(Q)\p_x^{-2}
+ \mathcal{L}_{1,m}(Q,\p_xQ)\p_x^{-3}\right)\mathcal{U}, 
\end{equation}
where
\[
I=
I_{2n},
\ 
\mathcal{L}(Q) 
=
\begin{pmatrix}
0_n & M^{-1}\Lambda \\
\overline{M^{-1}\Lambda} & 0_n
\end{pmatrix},
\ 
\mathcal{L}_{1,m}(Q,\p_xQ) 
=
\begin{pmatrix}
0_n & M^{-1}\Lambda_{1,m} \\
\overline{M^{-1}\Lambda_{1,m}} & 0_n
\end{pmatrix},
\]
and $\Lambda=\Lambda(Q)$ and $\Lambda_{1,m}=\Lambda_{1,m}(Q,\p_xQ)$ are
$n\times n$ matrix-valued functions 
defined by 
\begin{align}
\Lambda&=-\dfrac{1}{2}\Phi(Q)iI_n, 
\quad 
\Phi(Q)=
\begin{pmatrix}
I_n & 0_n
\end{pmatrix}
\mathcal{R}_H^{\mathrm{off}}(Q)
\begin{pmatrix}
0_n & I_n
\end{pmatrix}^{\top}, 
\label{eq:Lambda}
\\
\Lambda_{1,m}
&=
-\dfrac{1}{2}\Psi_m(Q,\p_xQ)iI_n, 
\quad 
\Psi_m(Q, \p_xQ)=
\begin{pmatrix}
I_n & 0_n
\end{pmatrix}
\mathcal{K}_{m,S}^{\mathrm{off}}(Q, \p_xQ)
\begin{pmatrix}
0_n & I_n
\end{pmatrix}^{\top}. 
\label{eq:Lambda1m}
\end{align}
In this setting, 
$\mathcal{K}_{m,S}^{\mathrm{off}}(Q, \p_xQ)$ 
denotes the block-off-diagonal part of 
$\mathcal{K}_{m,S}(Q, \p_xQ)$, 
and $\mathcal{K}_{m,S}(Q, \p_xQ)$ 
denotes the skew-Hermitian part of  
$\mathcal{K}_{m}(Q, \p_xQ)$ which is defined by 
\begin{equation}
\mathcal{K}_{m}(Q, \p_xQ)=\mathcal{R}_{m}(Q,\p_xQ)
-\p_x\left(\mathcal{R}(Q)\right). 
\label{eq:K_m}
\end{equation}
Here, $\p_x^{-2}\mathcal{U}$ and 
$\p_x^{-3}\mathcal{U}$ are well-defined, 
since $\mathcal{U}$ is an image of 
$\p_x^m$ for $m\geqslant 3$.   
Under this transformation, the terms acted by
$\mathcal{R}_H^{\mathrm{off}}(Q) \p_x^2$ and 
 $\mathcal{K}_{m,S}^{\mathrm{off}}(Q, \p_xQ)\p_x$
are eliminated so that 
the system \eqref{eq:matrix_system} is reduced to
\begin{align}
\label{eq:V_system}
\p_t \mathcal{V}
&= \mathcal{M} \p_x^4 \mathcal{V}
+ \mathcal{R}_H^{\mathrm{diag}}(Q) 
\p_x^2 \mathcal{V} \notag\\
&\quad
+ \p_x \left\{\mathcal{R}_S(Q) \p_x \mathcal{V}\right\}
+ \mathcal{H}_{m}(Q,\p_xQ)\p_x \mathcal{V}
+\mathcal{K}_{m,S}^{\mathrm{diag}}(Q,\p_xQ)\p_x \mathcal{V}
\nonumber
\\
&\quad 
+O(|\p_tQ||Q||\p_x^{m-2}\mathcal{Q}|)
+O((|\p_t\p_xQ||Q|+|\p_xQ||\p_tQ|)|
\p_x^{m-3}\mathcal{Q}|)
+ \text{l.o.t.}, 
\end{align}
where 
\begin{equation}
\mathcal{H}_{m}(Q,\p_xQ)
=\mathcal{R}_{m,H}(Q,\p_xQ)
-4\mathcal{M}\p_x\left(\mathcal{L}(Q)\right).  
\label{eq:Hm}
\end{equation}
The second term of the right hand side of \eqref{eq:Hm}
is Hermitian. 
Indeed, from the definition of $\mathcal{M}$ and 
$\mathcal{L}(Q)$ with 
\eqref{eq:Lambda}, it follows that 
\begin{align}
\mathcal{M}\p_x\left(\mathcal{L}(Q)\right)
&=
\begin{pmatrix}
0_n & i\p_x(\Lambda(Q)) \\
\overline{i\p_x(\Lambda(Q))} & 0_n
\end{pmatrix}
=\dfrac{1}{2}
\begin{pmatrix}
0_n & \p_x(\Phi(Q)) \\
\overline{\p_x(\Phi(Q))} & 0_n
\end{pmatrix}
\nonumber
\\
&
=\dfrac{1}{2}\p_x(
\mathcal{R}_{H}^{\mathrm{off}}(Q)
). 
\nonumber
\end{align}
Since the right-hand side is expressed as a derivative 
of $\mathcal{R}_{H}^{\mathrm{off}}(Q)$, 
which is Hermitian, 
it follows that it is also Hermitian.
Therefore $\mathcal{H}_{m}(Q,\p_xQ)$ is also Hermitian, 
which shows the term $\mathcal{H}_{m}(Q,\p_xQ)\p_x \mathcal{V}$ behaves as a first-order term 
of a symmetric hyperbolic system 
in that it causes no loss of derivatives 
via integration by parts. 
By the skew-Hermitian property of $\mathcal{R}_S(Q)$, 
the term $\p_x \left\{\mathcal{R}_S(Q) \p_x \mathcal{V}\right\}$ is also harmless 
via integration by parts. 
In contrast, under the assumption $m\geqslant 5$, 
the genuinely problematic terms are those acted by  
$\mathcal{R}_H^{\mathrm{diag}}(Q)\p_x^2$
or $\mathcal{K}_{m,S}^{\mathrm{diag}}(Q,\p_xQ)\p_x$. 
\par 
Second, we introduce and impose the two essential conditions:  
\begin{align}
	\tag{$\widetilde{C1}$} 
	\mathcal{R}_H^{\mathrm{diag}}(Q) 
	&= 0,
\\
	\tag{$\widetilde{C2}$}
\begin{pmatrix}
	I_n & 0_n
\end{pmatrix}
\mathcal{K}_{m,S}^{\mathrm{diag}}(Q,\p_xQ)
\begin{pmatrix}
	I_n & 0_n
\end{pmatrix}^{\top}
&=
\p_x\left(
D_m(Q)
\right), 
\end{align}
where $D_m(Q)$ is an $n\times n$ matrix-valued function
defined on $I_T\times\TT$,
which depends on $m$ and is of order $|Q|^2$.
Under the assumptions ($\widetilde{C1}$)-($\widetilde{C2}$), 
the term acted by 
$\mathcal{R}_H^{\mathrm{diag}}(Q)\p_x^2$ vanishes and 
the term acted by $\mathcal{K}_{m,S}^{\mathrm{diag}}(Q,\p_xQ)\p_x$ 
is also eliminated by introducing a new transformed variable:  
\begin{equation}
	\label{eq:W_def}
	\mathcal{W}
	= \left(I + \mathcal{L}(Q)\p_x^{-2}
	+ \mathcal{L}_{1,m}(Q,\p_xQ)\p_x^{-3}
	- \mathcal{L}_{2,m}(Q)\p_x^{-2}
	\right)\mathcal{U}. 
\end{equation}
Here, $I$, $\mathcal{L}(Q)$, and $\mathcal{L}_{1,m}(Q,\p_xQ)$ 
are the same as those introduced above, 
\[
\mathcal{L}_{2,m}(Q) 
=
\begin{pmatrix}
	M^{-1}\Lambda_{2,m}  & 0_n\\
	0_n & \overline{M^{-1}\Lambda_{2,m}}
\end{pmatrix},
\]
and 
\begin{equation}
\Lambda_{2,m}=\Lambda_{2,m}(Q)
=\dfrac{1}{4}D_m(Q)iI_n. 
\label{eq:Lambda2m}
\end{equation}
Indeed, a simple computation using \eqref{eq:W_def} and 
$\mathcal{U}=\mathcal{W}+O(|Q|^2)\p_x^{m-2}\mathcal{Q}
+O(|Q||\p_xQ|)\p_x^{m-3}\mathcal{Q}$
yields  
\begin{align}
\p_t \mathcal{W}
&=
\mathcal{M} \p_x^4\mathcal{W}
+\mathcal{R}(Q) \p_x^2\mathcal{W}
+\mathcal{R}_m(Q,\p_xQ) \p_x\mathcal{W}
\nonumber
\\
&\quad 
-\left[
\mathcal{M}\p_x^4, \mathcal{L}(Q)\p_x^{-2}
\right]\mathcal{U}
-\left[
\mathcal{M}\p_x^4, \mathcal{L}_{1,m}(Q,\p_xQ)\p_x^{-3}
\right]\mathcal{U}
+\left[
\mathcal{M}\p_x^4, \mathcal{L}_{2,m}(Q)\p_x^{-2}
\right]\mathcal{U}
\nonumber
\\
&\quad 
+\p_t(\mathcal{L}(Q))\p_x^{m-2}\mathcal{Q}
+\p_t(\mathcal{L}_{1,m}(Q,\p_xQ))\p_x^{m-3}\mathcal{Q}
+\p_t(\mathcal{L}_{2,m}(Q))\p_x^{m-2}\mathcal{Q}
+\text{l.o.t.}. 
\label{eq:W12}
\end{align}
Here 
\[
\left[
\mathcal{M}\p_x^4, \mathcal{L}(Q)\p_x^{-2}
\right]\mathcal{U}
=
\left[
\mathcal{M}, \mathcal{L}(Q)
\right]\p_x^2\mathcal{U}
+
4\mathcal{M}\p_x\left(
\mathcal{L}(Q)
\right)
\p_x\mathcal{U}
+\text{l.o.t.}. 
\]
Moreover, using $[M, \Lambda(Q)]=0_n$ which is ensured by 
the assumption $M=a I_n$, and using
\eqref{eq:Lambda} and the definition of $\mathcal{R}(Q)$, 
we deduce 
\begin{align}
\left[
\mathcal{M}, \mathcal{L}(Q)
\right]
&=
\begin{pmatrix}
	0_n & 2i\Lambda \\
	\overline{2i\Lambda} & 0_n
\end{pmatrix}
=
\begin{pmatrix}
	0_n & \Phi(Q) \\
	\overline{\Phi(Q)} & 0_n
\end{pmatrix}
=
\mathcal{R}_H^{\mathrm{off}}(Q). 
\nonumber
\end{align}
Hence we have 
\begin{equation}
\left[
\mathcal{M}\p_x^4, \mathcal{L}(Q)\p_x^{-2}
\right]\mathcal{U}
=
\mathcal{R}_H^{\mathrm{off}}(Q)
\p_x^2\mathcal{U}
+
4\mathcal{M}\p_x\left(
\mathcal{L}(Q)
\right)
\p_x\mathcal{U}
+\text{l.o.t.}. 
\label{eq:Clambda}
\end{equation}
In the same way as above, 
using $[M, \Lambda_{1,m}(Q,\p_xQ)]=0_n$ 
ensured by $M=a I_n$, and using
\eqref{eq:Lambda1m} and the definitions of 
$\mathcal{R}(Q)$ and $\mathcal{R}_{1,m}(Q,\p_xQ)$, 
we deduce 
\begin{align}
	\left[
	\mathcal{M}, \mathcal{L}_{1,m}(Q,\p_xQ)
	\right]
	&=
	\begin{pmatrix}
		0_n & 2i\Lambda_{1,m} \\
		\overline{2i\Lambda_{1,m}} & 0_n
	\end{pmatrix}
	=
	\begin{pmatrix}
		0_n & \Psi_m \\
		\overline{\Psi_m} & 0_n
	\end{pmatrix}
	=
	\mathcal{K}_{m,S}^{\mathrm{off}}(Q,\p_xQ), 
	\nonumber
\end{align}
and hence 
\begin{align}
\left[
\mathcal{M}\p_x^4, \mathcal{L}_{1,m}(Q,\p_xQ)\p_x^{-3}
\right]\mathcal{U}
&=
\left[
\mathcal{M}, \mathcal{L}_{1,m}(Q,\p_xQ)
\right]\p_x\mathcal{U}
+\text{l.o.t.}.
\nonumber
\\
&=
\mathcal{K}_{m,S}^{\mathrm{off}}(Q,\p_xQ)
\p_x\mathcal{U}
+\text{l.o.t.}. 
\label{eq:Clambda1m}
\end{align}
In contrast, we see 
$\left[
\mathcal{M}, \mathcal{L}_{2,m}(Q)
\right]=0_{2n}$ 
follows from $[M, \Lambda_{2,m}(Q)]=0_n$ 
which is ensured by $M=a I_n$.  
This is because 
$\mathcal{L}_{2,m}(Q)$ consists only of components in 
the block-diagonal part. 
Noting this and using \eqref{eq:Lambda2m} and 
the condition ($\widetilde{C2}$), we obtain 
\begin{align}
	\left[
	\mathcal{M}\p_x^4, \mathcal{L}_{2,m}(Q)\p_x^{-2}
	\right]\mathcal{U}
	&=
	4\mathcal{M}\p_x\left(
	\mathcal{L}_{2,m}(Q)
	\right)
	\p_x\mathcal{U}
	+\text{l.o.t.}
	\nonumber
	\\
	&=
	4\begin{pmatrix}
		i\p_x(\Lambda_{2,m})  & 0_n\\
		0_n & \overline{i\p_x(\Lambda_{2,m})}
	\end{pmatrix}
	\p_x\mathcal{U}
	+\text{l.o.t.}
	\nonumber
	\\&=
	-\begin{pmatrix}
		\p_x(D_{m}(Q))  & 0_n\\
		0_n & \overline{\p_x(D_{m}(Q))}
	\end{pmatrix}
	\p_x\mathcal{U}
	+\text{l.o.t.}
	\nonumber
	\\&=
	-\mathcal{K}_{m,S}^{\mathrm{diag}}(Q,\p_xQ)\p_x\mathcal{U}
	+\text{l.o.t.}.
	\label{eq:Clambda2m}
\end{align}
In addition, $\p_t(\Lambda(Q))$ and $\p_t(\Lambda_{2,m}(Q))$ 
are of order $|\p_tQ||Q|$ and $\p_t(\Lambda_{1,m}(Q,\p_xQ))$ is of order 
$|\p_t\p_xQ||Q|+|\p_xQ||\p_tQ|$. 
Noting them and substituting \eqref{eq:Clambda}-\eqref{eq:Clambda2m} into 
\eqref{eq:W12}, 
and then using 
$\mathcal{U}=\mathcal{W}+O(|Q|^2)\p_x^{m-2}\mathcal{Q}
+O(|Q||\p_xQ|)\p_x^{m-3}\mathcal{Q}$, 
we deduce 
\begin{align}
	\p_t \mathcal{W}
	&=
	\mathcal{M} \p_x^4\mathcal{W}
	+\mathcal{R}(Q) \p_x^2\mathcal{W}
	+\mathcal{R}_m(Q,\p_xQ) \p_x\mathcal{W}
	\nonumber
	\\
	&\quad 
	-\mathcal{R}_H^{\mathrm{off}}(Q)
	\p_x^2\mathcal{W}
	-
	4\mathcal{M}\p_x\left(
	\mathcal{L}(Q)
	\right)
	\p_x\mathcal{W}
	-\mathcal{K}_{m,S}^{\mathrm{off}}(Q,\p_xQ)
	\p_x\mathcal{W}
	-\mathcal{K}_{m,S}^{\mathrm{diag}}(Q,\p_xQ)\p_x\mathcal{W}
	\nonumber
	\\
	&\quad 
	+O(|\p_tQ||Q||\p_x^{m-2}\mathcal{Q}|)
	+O((|\p_t\p_xQ||Q|+|\p_xQ||\p_tQ|)|
	\p_x^{m-3}\mathcal{Q}|)
	+ \text{l.o.t.}. 
	\nonumber
\end{align}
Here, by the condition ($\widetilde{C1}$), 
we see  
$\mathcal{R}=\mathcal{R}_H+\mathcal{R}_S
=\mathcal{R}_H^{\mathrm{off}}+\mathcal{R}_S$ 
and hence 
\[
\mathcal{R}(Q) \p_x^2\mathcal{W}
	-\mathcal{R}_H^{\mathrm{off}}(Q)
\p_x^2\mathcal{W}
=
\mathcal{R}_S(Q) \p_x^2\mathcal{W}
=
\p_x\left\{
\mathcal{R}_S(Q) \p_x\mathcal{W}
\right\}
-\p_x(\mathcal{R}_S(Q))\p_x\mathcal{W}.
\]
Using this, 
$\mathcal{R}_m=\mathcal{R}_{m,H}+\mathcal{R}_{m,S}$, 
and 
$\mathcal{K}_{m,S}=\mathcal{K}_{m,S}^{\mathrm{diag}}+\mathcal{K}_{m,S}^{\mathrm{off}}$, 
we have 
\begin{align}
	\p_t \mathcal{W}
	&=
	\mathcal{M} \p_x^4\mathcal{W}
	+\p_x\left\{
	\mathcal{R}_S(Q) \p_x\mathcal{W}
	\right\}
	+\left\{
	\mathcal{R}_{m,H}(Q,\p_xQ) -
	4\mathcal{M}\p_x\left(
	\mathcal{L}(Q)
	\right)
	\right\}
	\p_x\mathcal{W}
	\nonumber
	\\
	&\quad
	+\left\{
	\mathcal{R}_{m,S}(Q,\p_xQ) 
		-\p_x(\mathcal{R}_S(Q))
	-\mathcal{K}_{m,S}(Q,\p_xQ)
	\right\}
	\p_x\mathcal{W}
	\nonumber
	\\
	&\quad 
	+O(|\p_tQ||Q||\p_x^{m-2}\mathcal{Q}|)
	+O((|\p_t\p_xQ||Q|+|\p_xQ||\p_tQ|)|
	\p_x^{m-3}\mathcal{Q}|)
	+ \text{l.o.t.}. 
	\nonumber
\end{align}
By the setting \eqref{eq:K_m} and \eqref{eq:Hm}, 
both the third and fourth terms of right hand side are 
canceled out, and thus  
we conclude that the system \eqref{eq:matrix_system} 
is reduced to 
\begin{align}
	\label{eq:W_system}
	\p_t \mathcal{W}
	&= \mathcal{M} \p_x^4 \mathcal{W}
	+ \p_x \left\{\mathcal{R}_S(Q) \p_x \mathcal{W}\right\}
	+ \mathcal{H}_{m}(Q,\p_xQ)\p_x \mathcal{W}
	\nonumber
	\\
	&\quad 
	+O(|\p_tQ||Q||\p_x^{m-2}\mathcal{Q}|)
	+O((|\p_t\p_xQ||Q|+|\p_xQ||\p_tQ|)|
	\p_x^{m-3}\mathcal{Q}|)
	+ \text{l.o.t.}, 
\end{align}
which no longer contains terms leading to derivative loss 
in the energy estimates. 
\par 
It is also convenient to reduce 
\eqref{eq:W_system} back to the original 
$n$-component system:   
Before doing this, let us observe the following 
$2n\times 2n$ matrix 
\begin{equation}
\mathcal{C}=
\begin{pmatrix}
	A & B \\
	\overline{B} & \overline{A}
\end{pmatrix}, 
\label{eq:MatC}
\end{equation}
where $A,B$ are $n\times n$ matrices. 
It is an elementary property that the following equivalences hold, respectively: 
\begin{align}
\mathcal{C}^{*}=\mathcal{C} 
&\iff 
A=A^{*} \ \text{and}\ B=B^{\top}, 
\nonumber
\\
\mathcal{C}^{*}=-\mathcal{C} 
&\iff 
A=-A^{*} \ \text{and}\ B=-B^{\top}.
\nonumber
\end{align}
By definition, both $\mathcal{R}_S(Q)$ with $\mathcal{R}_S(Q)^{*}=-\mathcal{R}_S(Q)$
and $\mathcal{H}_{m}(Q,\p_xQ)$ with $\mathcal{H}_{m}(Q,\p_xQ)^{*}=\mathcal{H}_{m}(Q,\p_xQ)$
are expressed 
as the form \eqref{eq:MatC}. Thus, we can express 
\begin{align}
	\mathcal{R}_S(Q)
	&=
	\begin{pmatrix}
		J(Q) & K(Q) \\
		\overline{K(Q)} & \overline{J(Q)}
	\end{pmatrix}, 
	\quad 
\mathcal{H}_{m}(Q,\p_xQ)
=
\begin{pmatrix}
	H_m(Q,\p_xQ) & S_m(Q,\p_xQ) \\
	\overline{S_m(Q,\p_xQ)} & \overline{H_m(Q,\p_xQ)}
\end{pmatrix}, 
\nonumber
\end{align}
where $J(Q)=-J(Q)^{*}$, $K(Q)=-K(Q)^{\top}$, $H_m(Q,\p_xQ)=H_m(Q,\p_xQ)^{*}$, and
$S_m(Q,\p_xQ)=S_m(Q,\p_xQ)^{\top}$. 
Moreover, by the definition \eqref{eq:W_def}, we can express 
$\mathcal{W}=(W\  \overline{W})^{\top}$ for $\CC^n$-valued function $W$. 
Therefore, by taking out the first $n$-components of \eqref{eq:W_system}, 
we derive 
\begin{align}
\p_tW&=ia\,\p_x^4W+\p_x\{
J(Q)\p_xW+K(Q)\overline{\p_xW}
\}
\nonumber
\\
&\quad 
+H_m(Q,\p_xQ)\p_xW+S_m(Q,\p_xQ)\overline{\p_xW}
\nonumber
\\
&\quad 
+O(|\p_tQ||Q||\p_x^{m-2}Q|)
+O((|\p_t\p_xQ||Q|+|\p_xQ||\p_tQ|)|
\p_x^{m-3}Q|)
+ \text{l.o.t.}. 
\label{eq:W_system2}
\end{align}
\par 
This reduction is the fundamental step in our analysis 
and will allow us 
to construct modified energies to complete the proof of 
Theorem~\ref{thm:main}.
\subsection{Proof of local well-posedness in $H^m$ 
for $m\geqslant 5$
based on ($\widetilde{C1}$)-($\widetilde{C2}$)} 
In this part, we present a framework
to prove local well-posedness 
of the initial value problem \eqref{eq:IVP} 
in $H^m(\TT;\CC^n)$ 
for any integer $m\geqslant 5$
by imposing 
($\widetilde{C1}$)--($\widetilde{C2}$) in certain situations. 
Apart from the difference in the form of the gauge-type transformations,
the overall strategy of the proof is essentially the same as that presented in detail in \cite{Onodera2025}.
Therefore, we omit some of the details and 
only sketch the argument. 
Furthermore, Theorem~\ref{thm:main} 
follows from the proof given here,
since its structural conditions satisfy ($\widetilde{C1}$)--($\widetilde{C2}$)
for all $m \ge 5$ and also for $m=2$, 
which will be verified in Section~\ref{sec:A2C1C2}. 
\par 
Let $m\geqslant 5$ be an integer, and let
$Q_0\in H^m(\RR;\mathbb{C}^n)$. 
From the time-reversibility, 
it suffices to solve \eqref{eq:IVP} 
in positive time-direction. 
\vspace{0.5em}
\par 
\noindent {\bf Uniform $H^m$-estimate for approximated solutions.}
Let $\left\{Q_{0}^{\ep}\right\}_{\ep\in (0,1)}$ be  
the so-called Bona-Smith approximation of $Q_0$ which 
is constructed to satisfy  
$Q_0^{\ep}\in H^{\infty}(\TT;\mathbb{C}^n)$, 
$Q_0^{\ep}\to Q_0$ in $H^m(\TT;\mathbb{C}^n)$ 
as $\ep \downarrow 0$, 
and  
\begin{align}
	&\|Q_0^{\ep}\|_{H^m}
	\leqslant 
	\|Q_0\|_{H^m}, 
	\label{eq:bs1}
	\\
	&\|Q_0^{\ep}\|_{H^{m+\ell}}
	\leqslant 
	C\ep^{-\ell}
	\|Q_0\|_{H^m}
	\quad 
	(\ell=0,1,2,\ldots),
	\label{eq:6092}
	\\
	&\|Q_0^{\ep}-Q_0\|_{H^{m-\ell}}
	\leqslant 
	C\ep^{\ell}
	\|Q_0\|_{H^m}
	\quad 
	(\ell=0,1,2,\ldots), 
	\label{eq:6093}
\end{align} 
where $C>0$ is a  
constant which depends on $m,\ell,\phi$, 
but not on $\ep$. 
Taking $Q_0^{\ep}$ for $\ep\in (0,1)$ as initial data, 
we consider the following fourth-order parabolic 
regularized problem 
\begin{alignat}{2}
	\left(
	\p_t+\ep^5 \p_x^4-iM\p_x^4-M_S\p_x^2
	\right)
	Q
	&=F(Q, \p_xQ, \p_x^2Q), 
	\label{eq:bpde}
	\\
	Q(0,x)
	&=
	Q_{0}^{\ep}(x) 
	\label{eq:bdata}
\end{alignat}
for $Q=(Q_1,\ldots,Q_n)^{\top}:[0,\infty)\times \TT\to \mathbb{C}^n$. 
By standard parabolic theory, there exist
$T_{\ep}=T(\ep, \|Q_0^{\ep}\|_{H^m})>0$ 
and a unique smooth solution 
$Q^{\ep}=(Q^{\ep}_1,\ldots,Q^{\ep}_n)^{\top}
\in C([0,T_{\ep}];H^{\infty}(\TT;\mathbb{C}^n))$.  
\par 
Based on the reduction performed in 
Section~\ref{subsec:reduction}, 
we introduce a $\CC^n$-valued function 
$W_m^{\ep}=W_m^{\ep}(t,x)$ 
defined by 
\begin{align}
W_m^{\ep} 
&:= \p_x^m Q^{\ep} 
+M^{-1}\Lambda(Q^{\ep})\overline{\p_x^{m-2}Q^{\ep}} 
+M^{-1}\Lambda_{1,m}(Q^{\ep},\p_xQ^{\ep})\overline{\p_x^{m-3}Q^{\ep}} 
\nonumber
\\
&\qquad 
-M^{-1}\Lambda_{2,m}(Q^{\ep})\p_x^{m-2}Q^{\ep}, 
\label{eq:Wvar}
\end{align}
where $\Lambda$, $\Lambda_{1,m}$ and  $\Lambda_{2,m}$ are $n\times n$ matrix-valued functions on 
$[0,T_{\ep}]\times \TT$
determined by \eqref{eq:Lambda}, \eqref{eq:Lambda1m}, and \eqref{eq:Lambda2m} 
respectively with $Q$ replaced by $Q^{\ep}$.
We note that \eqref{eq:Wvar} corresponds to the first $n$-components 
of the $2n$-component variable defined in \eqref{eq:W_def}.
Furthermore, we introduce a modified energy 
$\mathcal{E}_m(Q^{\ep})=\mathcal{E}_m(Q^{\ep}(t))$ defined by 
\begin{equation}
	\mathcal{E}_m(Q^{\ep}(t))
	:= \sqrt{\|W_m^{\ep}(t) \|_{L^2}^2 + \|Q^{\ep}(t) \|_{H^{m-1}}^2}. 
		\label{eq:Em_def}
\end{equation}
By the Sobolev embedding, there exist  
some constants $C_1,C_2>0$ which are 
independent of $\ep\in (0,1)$ such that   
\begin{equation}
	\frac{\|Q^{\ep}(t)\|_{H^m}^2}{C_1\,(1+\|Q^{\ep}(t)\|_{H^2}^2)}
	\leqslant 
	\mathcal{E}_m(Q^{\ep}(t))^2
	\leqslant 
	C_2\,
	(1+\|Q^{\ep}(t)\|_{H^2}^2)
	\|Q^{\ep}(t)\|_{H^m}^2
	\label{eq:eqi9231}
\end{equation}
on $[0,T_{\ep}]$.  
Furthermore, we set 
\begin{equation}
	T^{\star}_{\ep}
	=
	\sup
	\left\{
	T>0 \ | \ 
	\mathcal{E}_5(Q^{\ep}(t))\leqslant 2\,\mathcal{E}_5(Q^{\ep}(0))
	\ \ 
	\text{for all}
	\ \
	t\in[0,T]
	\right\}. 
	\label{eq:cuttime}
\end{equation}
By \eqref{eq:Em_def}-\eqref{eq:cuttime} 
and $\|Q_0^{\ep}\|_{H^5}\leqslant \|Q_0\|_{H^5}$ 
which follows from \eqref{eq:bs1} for $m=5$, 
there exist  
$C_k=C_k(\|Q_0\|_{H^5})>0$ for $k=3,4,5$ depending on 
$\|Q_0\|_{H^5}$ but not on $\ep\in (0,1)$ such that  
\begin{align}
	\sup_{t\in [0,T_{\ep}^{\star}]}\|Q^{\ep}(t)\|_{H^5}^2
	&\leqslant 
	C_3(\|Q_0\|_{H^5})    
	\label{eq:H3Emn}
\end{align}
and 
\begin{align}
	&\frac{\|Q^{\ep}(t)\|_{H^m}^2}{C_4(\|Q_0\|_{H^5})}
	\leqslant 
	\mathcal{E}_m(Q^{\ep}(t))^2
	\leqslant 
	C_5(\|Q_0\|_{H^5})\|Q^{\ep}(t)\|_{H^m}^2
	\ 
	\text{on $[0,T_{\ep}^{\star}]$}.
	\label{eq:H3Emm}
\end{align} 
This shows  $\mathcal{E}_m(Q^{\ep}(t))$
is equivalent to the standard $H^m$ norm of $Q^{\ep}(t)$ 
on $[0,T_{\ep}^{\star}]$.
\par 
Now we impose ($\widetilde{C1}$)-($\widetilde{C2}$) with $m\geqslant 5$
for $Q$ replaced by $Q^{\ep}$. 
Repeating the derivation of \eqref{eq:W_system2}
and taking into account the added parabolic term
$\ep^5\p_x^4Q$ in \eqref{eq:bpde},
we obtain an $n$-component system
satisfied by $W_m^{\ep}$ of the form 
\begin{align}
	\p_t W_m^{\ep}
	&= (-\ep^5I_n+ i\,M )\p_x^4 W_m^{\ep}
	+ \p_x\left\{J(Q^{\ep})\p_x W_m^{\ep}
	+K(Q^{\ep})\overline{\p_x W_m^{\ep}}
	\right\}
	\nonumber
	\\
	&\quad 
	+ H_m(Q^{\ep},\p_xQ^{\ep})\p_x W_m^{\ep}
	+S_m(Q^{\ep},\p_xQ^{\ep}) \overline{\p_x W_m^{\ep}}
	\nonumber
	\\
	&\quad 
	+O(|\p_tQ^{\ep}||Q^{\ep}||\p_x^{m-2}Q^{\ep}|)
	+O((|\p_t\p_xQ^{\ep}||Q^{\ep}|+|\p_xQ^{\ep}|
	|\p_tQ^{\ep}|)|
	\p_x^{m-3}Q^{\ep}|)
	\nonumber
	\\&\quad 
	+\ep^5\left\{
	O(|Q^{\ep}||\p_xQ^{\ep}||\p_x^{m+1}Q^{\ep}|)+r_1
	\right\}
	 +r_2, 
	 \label{eq:sys_Wep}
\end{align}
where $J(Q^{\ep})$, $K(Q^{\ep})$, $H_m(Q^{\ep},\p_xQ^{\ep})$, and $S_m(Q^{\ep},\p_xQ^{\ep})$ 
are $n\times n$ matrix-valued functions satisfying the following: 
\begin{itemize}
	\item $J(Q^{\ep})$ is  
	skew-Hermitian, and the entries 
	are of order $1+|Q^{\ep}|^2$. 
	\item $K(Q^{\ep})$ is 
	skew-symmetric, and the entries 
	are of order $|Q^{\ep}|^2$. 
	\item $H_m(Q^{\ep},\p_xQ^{\ep})$ is Hermitian, and the entries are 
	of order $|Q^{\ep}||\p_xQ^{\ep}|$. 
	\item 
	$S_m(Q^{\ep},\p_xQ^{\ep})$ is symmetric, and the entries 
	are of order $|Q^{\ep}||\p_xQ^{\ep}|$. 
\end{itemize}
No loss of derivatives occurs from lower-order terms acted by them in \eqref{eq:sys_Wep}, 
by integration by parts. 
Moreover, 
\begin{itemize}
	\item $r_1$ and $r_2$ consist of ``l.o.t.'' which satisfy 
	$$\|r_j(t)\|_{L^2}\leqslant
	C(m,\|Q^{\ep}(t)\|_{H^5})
	\|Q^{\ep}(t)\|_{H^m} \quad (j=1,2).
	$$ 
	\item 
	The second-to-last term on the right-hand side,
	which contains the factor $\ep^5$,
	consists of the first $n$ components of
	\[
	\left[\ep^5I\p_x^4,
	\mathcal{L}(Q)\p_x^{-2}
	+\mathcal{L}_{1,m}(Q)\p_x^{-3}
	-\mathcal{L}_{2,m}(Q)\p_x^{-2}
	\right]
	(\p_x^mQ^{\ep}\ \ \overline{\p_x^mQ^{\ep}})^{\top}
	\]
	and can be absorbed into the contribution of
	$-\ep^5\p_x^4W_m^\ep$
	in the energy estimate.
	\item The assumption $m\geqslant 5$ ensures that 
	the $L^2$-norm in $x$ of the third and the fourth to last terms on the right-hand side including $\p_tQ^{\ep}$ or $\p_t\p_xQ^{\ep}$ are bounded by $C(m,\|Q^{\ep}(t)\|_{H^5})
		\|Q^{\ep}(t)\|_{H^m}$. 
\end{itemize}
By \eqref{eq:sys_Wep} and \eqref{eq:eqi9231},
the classical energy estimates based on integration by parts
and the Sobolev embedding yield  
\begin{align}
&
\dfrac{d}{dt}
\|W_m^{\ep}(t)\|_{L^2}^2
+\dfrac{\ep^5}{2}\|\p_x^2W_m^{\ep}(t)\|_{L^2}^2
\leqslant 
C(m,\|Q^{\ep}(t)\|_{H^5})\mathcal{E}_m(Q^{\ep}(t))^2. 
\label{eq:9248}
\end{align}
Permitting loss of one-derivative, it is easy to see 
the time-derivative of $\|Q^{\ep}(t)\|_{H^{m-1}}^2$ 
is bounded by 
$C(m,\|Q^{\ep}(t)\|_{H^5})
\|Q^{\ep}(t)\|_{H^m}^2$. 
Combining them and using  \eqref{eq:H3Emn}-\eqref{eq:H3Emm}, 
we derive 
\begin{align}
&
\dfrac{d}{dt}
\mathcal{E}_m(Q^{\ep}(t))^2
\leqslant 
C(m,\|Q_0\|_{H^5})\mathcal{E}_m(Q^{\ep}(t))^2
\quad
\text{on $[0,T_{\ep}^{\star}]$}. 
\nonumber
\end{align}
Here, $C(m,\|Q_0\|_{H^5})>0$ denotes 
a constant depending on $m$ and $\|Q_0\|_{H^5}$ 
but not on $\ep\in (0,1)$. 
Furthermore, the Gronwall inequality shows  
\begin{align}
\mathcal{E}_m(Q^{\ep}(t))^2
&\leqslant 
\mathcal{E}_m(Q^{\ep}(0))^2
\exp(C(m,\|Q_0\|_{H^5})T_{\ep}^{\star})
\quad
\text{on $[0,T_{\ep}^{\star}]$}.
\nonumber
\end{align}
Combining the above estimate with the definition of
$T_{\ep}^{\star}$, \eqref{eq:H3Emm},
\eqref{eq:eqi9231} evaluated at $t=0$,
and \eqref{eq:bs1},
we see there exists $T=T(\|Q_0\|_{H^5})>0$ 
satisfying $T\leqslant T_{\ep}^{\star}$ uniformly in 
$\ep\in (0,1)$ such that   
\begin{align}
\sup_{t\in [0,T]}
\|Q^{\ep}(t)\|_{H^m}^2
&\leqslant 
C(T, \|Q_0\|_{H^5}, m)
\|Q_0^{\ep}\|_{H^m}^2
\leqslant 
C(T, \|Q_0\|_{H^5}, m)
\|Q_0\|_{H^m}^2.
\label{eq:H153}
\end{align}
This concludes that $\left\{Q^{\ep}\right\}_{\ep\in (0,1)}$ 
is bounded in $L^{\infty}(0,T; H^m(\TT;\CC^n))$. 
\vspace{0.5em}
\par 
\noindent 
{\bf Estimates for the difference of $Q^{\mu}$ and $Q^{\nu}$.}
Let $Q^{\mu}$ and $Q^{\nu}$ denote the solutions to 
\eqref{eq:bpde}-\eqref{eq:bdata} 
with $(\ep,Q_0^{\ep})$ replaced by $(\mu,Q_0^{\mu})$ and that by $(\nu,Q_0^{\nu})$ 
respectively.
The estimate \eqref{eq:H153} for 
$(Q^{\ep}, Q_0^{\ep})$
also holds for 
$(Q^{\mu}, Q_0^{\mu})$ and 
$(Q^{\nu}, Q_0^{\nu})$.
\par 
We impose ($\widetilde{C1}$)-($\widetilde{C2}$) with $m\geqslant 5$ and with $m=2$
for $Q$ replaced by $Q^{\mu}$.  
Then we can show the following: 
\begin{proposition}
\label{proposition:cauchy}
There exists a constant $C=C(T,\|Q_0\|_{H^m},m)>1$ such that 
for all $\mu$ and $\nu$ satisfying 
$0<\mu\leqslant \nu<1$, 
\begin{align}
\|Q^{\mu}-Q^{\nu}\|_{C([0,T];H^2)}
&\leqslant 
C
(\nu^{m-2}+\nu^4),
\label{eq:1cauchy}
\\
\|Q^{\mu}-Q^{\nu}\|_{C([0,T];H^m)}
&\leqslant 
C\left(
\nu^{m-4}+\nu
+\|Q_0^{\mu}-Q_0^{\nu}\|_{H^m}
\right).
\label{eq:mcauchy}
\end{align}  
\end{proposition}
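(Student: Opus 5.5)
We follow the Bona--Smith scheme as in \cite{Onodera2025}, with the gauge-type transformations of Section~\ref{subsec:reduction} adapted to the difference of two approximate solutions. Set $Z:=Q^{\mu}-Q^{\nu}$ for $0<\mu\leqslant\nu<1$. Subtracting the two regularized equations \eqref{eq:bpde} gives
\[
(\p_t+\mu^5\p_x^4-iM\p_x^4-M_S\p_x^2)Z=F(Q^{\mu},\p_xQ^{\mu},\p_x^2Q^{\mu})-F(Q^{\nu},\p_xQ^{\nu},\p_x^2Q^{\nu})-(\mu^5-\nu^5)\p_x^4Q^{\nu}.
\]
We expand the nonlinear difference so that the top-order derivatives of $Z$ appear with coefficients depending only on $Q^{\mu}$. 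The remaining terms have coefficients involving $Z$, multiplied by derivatives of $Q^{\nu}$ of order at most $2$ (after applying $\p_x^k$, of order at most $k+2$). This is why ($\widetilde{C1}$)--($\widetilde{C2}$) are imposed only for $Q^{\mu}$.

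\textbf{Step 1: the $H^2$ estimate \eqref{eq:1cauchy}.} Apply $\p_x^2$ and let $U=\p_x^2Z$. Define $W_2$ as in \eqref{eq:Wvar} with $m=2$, $Q$ replaced by $Q^{\mu}$, and $\p_x^{m-2}Q,\p_x^{m-3}Q$ replaced by $Z$ and $\p_x^{-1}$-type lower-order terms. At this low level, $\p_x^{-1}Z$ need not be defined, since $Z$ may have nonzero mean. We therefore replace the operator $\p_x^{-3}$ by multiplication against $Z$ together with the Kwon--Segata modified-energy trick \cite{Kwon,Segata2012}: correction terms that are cubic in the unknowns are added to $\|W_2\|_{L^2}^2+\|Z\|_{H^1}^2$. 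Their time derivatives cancel the remaining first-order skew-Hermitian off-diagonal contributions, and these terms are bounded by $C\|Z\|_{H^2}^2$.

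Terms in which $Z$ multiplies $\p_x^4Q^{\nu}$ are harmless, since $Q^{\nu}$ is bounded in $H^m$ with $m\geqslant5$. The term $(\mu^5-\nu^5)\p_x^6Q^{\nu}$, paired with $W_2$, is bounded by $\nu^5\|Q^{\nu}\|_{H^6}\|Z\|_{H^2}$. Using \eqref{eq:6092} in the form $\|Q^\nu(t)\|_{H^{m+\ell}}\lesssim\nu^{-\ell}$, which is propagated along the flow by the same energy argument at level $m+\ell$, gives $\nu^5\|Q^{\nu}\|_{H^6}\lesssim\nu^{5-\max(0,6-m)}\leqslant \nu^{4}$. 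Gronwall, together with $\|Z(0)\|_{H^2}\leqslant\|Q_0^\mu-Q_0\|_{H^2}+\|Q_0^\nu-Q_0\|_{H^2}\lesssim\nu^{m-2}$ from \eqref{eq:6093}, then yields \eqref{eq:1cauchy}.

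\textbf{Step 2: the $H^m$ estimate \eqref{eq:mcauchy}.} Apply $\p_x^m$ and set
\[
W:=\p_x^mZ+M^{-1}\Lambda(Q^\mu)\overline{\p_x^{m-2}Z}+M^{-1}\Lambda_{1,m}(Q^{\mu},\p_xQ^{\mu})\overline{\p_x^{m-3}Z}-M^{-1}\Lambda_{2,m}(Q^\mu)\p_x^{m-2}Z.
\]
The structure \eqref{eq:sys_Wep} holds for $W$ with coefficients in $Q^{\mu}$, so the principal derivative-loss terms cancel exactly as in the uniform estimate. The new difficulty is the terms like $(\p_x^{m}Z)\cdot Z\cdot\p_x^{2}Q^{\nu}$ and $Z\cdot\p_x^{m+2}Q^{\nu}$ arising from Leibniz expansions. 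The first is fine. The second is bounded by $\|Z\|_{L^\infty}\|Q^{\nu}\|_{H^{m+2}}\lesssim \nu^{-2}\|Z\|_{H^1}\lesssim \nu^{-2}(\nu^{m-2}+\nu^4)=\nu^{m-4}+\nu^2$ by Step~1. Similarly, $Z$ times $\p_x^{m+1}Q^\nu$ gives $\nu^{-1}(\nu^{m-2}+\nu^4)$, and the regularization difference gives $\nu^5\|Q^{\nu}\|_{H^{m+4}}\lesssim\nu$. The energy inequality then takes the form
\[
\frac{d}{dt}E\leqslant C E+C(\nu^{m-4}+\nu)\sqrt{E}, \qquad E\sim\|Z\|_{H^m}^2,
\]
and Gronwall gives \eqref{eq:mcauchy}.

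\textbf{Main obstacle.} The hardest part is Step~1. At the $H^2$ level, the order $-3$ gauge correction and $\p_x^{-1}$ are not available, so the off-diagonal first-order terms cannot be removed by the transformation \eqref{eq:W_def}. Their cancellation must instead come from the explicit modified-energy correction terms, which we will have to design carefully. We will first try to use ($\widetilde{C2}$) with $m=2$, which is exactly why that case is assumed.
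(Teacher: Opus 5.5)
Your architecture matches the paper's. You freeze the top-order coefficients at $Q^{\mu}$, use the gauge $\p_x^2Z+M^{-1}\Lambda(Q^\mu)\overline{Z}-M^{-1}\Lambda_{2,2}(Q^\mu)Z$ at level $2$ and the variable \eqref{eq:Wmmn} at level $m$, and do the same $\nu$-bookkeeping ($\nu^5\|Q^\nu\|_{H^6}\lesssim\nu^4$, $\nu^{-2}\|Z\|_{H^1}\lesssim\nu^{m-4}+\nu^2$, $\nu^5\|Q^\nu\|_{H^{m+4}}\lesssim\nu$). Step~2 is fine once Step~1 is available.

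\textbf{The gap is Step~1.} You leave the replacement for the ill-defined order $-3$ gauge as something you ``will have to design.'' Your plan to get the cancellation from ($\widetilde{C2}$) with $m=2$ also points the wrong way. ($\widetilde{C2}$) concerns only the block-diagonal part $\mathcal{K}_{2,S}^{\mathrm{diag}}$, and it is already used up by the $\Lambda_{2,2}$ gauge, which is well defined at level $2$. The block-off-diagonal term $\mathcal{K}_{2,S}^{\mathrm{off}}$ needs no structural hypothesis: an explicit correction cancels it identically.

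\textbf{The missing idea} is the one in Remark~\ref{remark:ME}. Expand formally $\frac12\|\p_x^2Z+M^{-1}\Lambda\overline{Z}+M^{-1}\Lambda_{1,2}\overline{\p_x^{-1}Z}-M^{-1}\Lambda_{2,2}Z\|_{L^2}^2$, integrate the cross term with $\p_x^2Z$ by parts once, and keep only the well-defined part. This gives \eqref{eq:5313}, with $W$ renamed $Z$:
\[
\tfrac12\bigl\|\p_x^2Z+M^{-1}\Lambda(Q^\mu)\overline{Z}-M^{-1}\Lambda_{2,2}(Q^\mu)Z\bigr\|_{L^2}^2-\operatorname{Re}\langle\p_xZ,M^{-1}\Lambda_{1,2}(Q^\mu,\p_xQ^\mu)\overline{Z}\rangle+A\|Z\|_{H^1}^2 .
\]
The correction is quadratic in $Z$, not cubic.

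Now use $M=aI_n$ and $\Lambda_{1,2}=-\frac{i}{2}\Psi_2$, where by \eqref{eq:KmSoff} $\Psi_2=\frac12\{(B_2-B_2^{\top})-\p_x(P_2-P_2^{\top})\}$ is skew-symmetric. Insert $\p_tZ\approx ia\p_x^4Z$ into the correction and integrate by parts, using $\Psi_2^{\top}=-\Psi_2$. This yields $-\operatorname{Re}\langle\Psi_2\overline{\p_x^3Z},\p_x^2Z\rangle$ modulo $C\|Z\|_{H^2}^2$. It exactly cancels the leftover $+\operatorname{Re}\langle\Psi_2\overline{\p_x^3Z},\p_x^2Z\rangle$ in the time derivative of the first term.

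\textbf{Two further points you glossed over.}
\begin{enumerate}
\item The correction is bounded by $C\|Z\|_{H^1}^2$, not merely by $C\|Z\|_{H^2}^2$ as you wrote. Only this lower-order bound lets a large $A$ give the equivalence \eqref{eq:92415}.
\item The correction's time derivative involves $\p_t\p_xQ^\mu$, which is where $m\geqslant5$ enters Step~1. It also involves the viscous term, which is absorbed by the $\mu^5$-dissipation, and the forcing $(\nu^5-\mu^5)\p_x^4Q^\nu$, which contributes $\nu^5\|Q^\nu\|_{H^5}\|Z\|_{H^1}$.
\end{enumerate}
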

\begin{proof}[Proof of \eqref{eq:1cauchy} in Proposition~\ref{proposition:cauchy}]
For $\mu, \nu$ satisfying 
$0<\mu \leqslant \nu<1$, 
we set $W:=Q^{\mu}-Q^{\nu}$.  
Since $Q^{\mu}$ and $Q^{\nu}$ satisfy \eqref{eq:bpde} 
with $\ep=\mu$ and that with $\ep=\nu$
respectively, 
\begin{align}
&\left(
 \p_t+\mu^5\p_x^4-iM\, \p_x^4-M_S\p_x^2
 \right)\p_x^2W
\nonumber
\\
&=
\p_x^2\left\{
F(Q^{\mu},\p_xQ^{\mu},\p_x^2Q^{\mu})\right\}
 -\p_x^2\left\{F(Q^{\nu},\p_xQ^{\nu},\p_x^2Q^{\nu})\right\}
 +(\nu^5-\mu^5)\p_x^4(\p_x^2Q^{\nu}). 
 \nonumber
\end{align}
A simple computation shows 
\begin{align}
&\left(
 \p_t+\mu^5\p_x^4-iM\, \p_x^4-M_S\p_x^2
 \right)\p_x^2W
\nonumber
\\
&=
(\nu^5-\mu^5)\p_x^6Q^{\nu}
+
 P_1(Q^{\mu})\p_x^4W
 +
 P_2(Q^{\mu})\overline{\p_x^4W} 
 \nonumber
 \\
 &\quad  
 +
 A_2(Q^{\mu},\p_xQ^{\mu})\p_x^3W
  +
  B_2(Q^{\mu},\p_xQ^{\mu})\overline{\p_x^3W}
 +r_1, 
 \label{eq:di6121}
 \end{align}
where $P_1(Q^{\mu})$, $P_2(Q^{\mu})$, 
$A_2(Q^{\mu},\p_xQ^{\mu})$ and $B_2(Q^{\mu},\p_xQ^{\mu})$ are the same 
$n\times n$ matrix-valued functions 
appearing in   \eqref{eq:U_eq}  
with $(Q,m)$ replaced by $(Q^{\mu},2)$, 
and 
$$
\|r_1(t)\|_{L^2}
\leqslant 
C(\|Q^{\mu}(t)\|_{H^3}, \|Q^{\nu}(t)\|_{H^4}) \|W(t)\|_{H^2} 
\quad \text{for any $t\in [0,T]$.}
$$
\par 
We introduce 
$\mathcal{E}_2^{\mu, \nu}(W)
=\mathcal{E}_2^{\mu, \nu}(W(t))$ 
defined by 
\begin{align}
\mathcal{E}_2^{\mu, \nu}(W(t))
&=
\frac{1}{2}
\|
\p_x^2W(t)
+M^{-1}\Lambda(Q^{\mu})\overline{W}(t)
-M^{-1}\Lambda_{2,2}(Q^{\mu})W(t)
\|_{L^2}^2
\nonumber
\\
&\quad 
-\operatorname{Re}
\langle
\p_xW(t), M^{-1}\Lambda_{1,2}(Q^{\mu},\p_xQ^{\mu})
\overline{W}(t)
\rangle
+A\, \|W(t)\|_{H^1}^2, 
\label{eq:5313}
\end{align}
where 
$\Lambda_1(Q^{\mu})$, 
$\Lambda_{1,2}(Q^{\mu},\p_xQ^{\mu})$ and 
$\Lambda_{2,2}(Q^{\mu})$ are determined by 
\eqref{eq:Lambda}, \eqref{eq:Lambda1m},   
and \eqref{eq:Lambda2m} respectively 
with $(Q,m)$ replaced by $(Q^{\mu},2)$. 
Noting the estimate \eqref{eq:H153} for 
$(Q^{\ep}, Q_0^{\ep})$
also holds with $\mu$ in place of $\ep$, 
we can take sufficiently large positive constants 
$A=A(T,\|Q_0\|_{H^m},m)$ 
and 
$C_1=C_1(T,\|Q_0\|_{H^m},m)$ 
to satisfy 
\begin{align}
\frac{1}{C_1}
\|W(t)\|_{H^2}^2
&\leqslant 
\mathcal{E}_2^{\mu, \nu}(W(t))
\leqslant 
C_1\,\|W(t)\|_{H^2}^2
\quad 
\text{on $[0,T]$.}
\label{eq:92415}
\end{align}
In what follows, we derive the estimate for 
$\mathcal{E}_2^{\mu, \nu}(W(t))$. 
\par 
For this purpose, 
we first set $Z=\p_x^2W
+M^{-1}\Lambda(Q^{\mu})\overline{W}
-M^{-1}\Lambda_{2,2}(Q^{\mu})W$, 
and set
\[
\mathcal{Z} :=
\begin{pmatrix}
	Z \ \ 
	\overline{Z} 
\end{pmatrix}^{\top},
	\quad 
\mathcal{W}_2 :=
\begin{pmatrix}
	\p_x^2W \ \
	\overline{\p_x^2W}
\end{pmatrix}^{\top}. 
\]
If we use the setting of Section~\ref{subsec:reduction}, 
then $\mathcal{Z}$ can be expressed as  
$$
\mathcal{Z}
=
\left(
I+\mathcal{L}(Q^{\mu})\p_x^{-2}-\mathcal{L}_{2,2}(Q^{\mu})\p_x^{-2}
\right)
\mathcal{W}_2. 
$$
Hence the same computation as that 
we get \eqref{eq:W_system} from \eqref{eq:W_def} works, except that 
the term corresponding to $\mathcal{L}_{1,2}(Q)\p_x^{-3}$ is dropped here. 
Taking the difference in account, and  using $M=a\,I_n$, and structural conditions
($\widetilde{C1}$)-($\widetilde{C2}$) with $m=2$ for $Q^{\mu}$, 
we can obtain 
\begin{align}
\p_t\mathcal{Z}
&=
(-\mu^5\,I+\mathcal{M})\p_x^4\,\mathcal{Z}
+\p_x\left\{\mathcal{R}_S(Q^{\mu})\p_x\mathcal{Z}\right\}
+\mathcal{H}_2(Q^{\mu},\p_xQ^{\mu})\p_x\mathcal{Z}
\nonumber
\\
&\quad
+ 
\mathcal{K}_{2,S}^{\mathrm{off}}(Q^{\mu}, \p_xQ^{\mu})\p_x\mathcal{Z}
+
(\nu^5-\mu^5)(\p_x^6Q^{\nu} \ \overline{\p_x^6Q^{\nu}})^{\top}
+\mu^5O(|Q^{\mu}||\p_xQ^{\mu}|)\p_x\mathcal{Z}
\nonumber
\\
&\quad 
+(r_2 \ \overline{r_2})^{\top}, 
\nonumber
\end{align}
where 
$$
\|r_2(t)\|_{L^2}
\leqslant 
C(\|Q^{\mu}(t)\|_{H^4}, \|Q^{\nu}(t)\|_{H^4}) \|W(t)\|_{H^2} 
\quad \text{for any $t\in [0,T]$.}
$$
Note that the seemingly problematic first-order term 
$\mathcal{K}_{2,S}^{\mathrm{off}}(Q^{\mu}, \p_xQ^{\mu})\p_x\mathcal{Z}$
still remains. 
Furthermore, using the first $n$-components of the system, 
we have
\begin{align}
	\p_t Z
	&= (-\mu^5I_n+ i\,M )\p_x^4 Z
	+ \p_x
	\left\{
	J(Q^{\mu})\p_x Z
	+K(Q^{\mu})\overline{\p_x Z}
	\right\}
	\nonumber
	\\&\quad
		+ H_2(Q^{\mu},\p_xQ^{\mu})\p_x Z 
	+S_2(Q^{\mu},\p_xQ^{\mu})\overline{\p_x Z}
	\nonumber
	\\
	&\quad 
	+
	\begin{pmatrix}
		I_n & 0_n
	\end{pmatrix}
	\mathcal{K}_{2,S}^{\mathrm{off}}(Q^{\mu},\p_xQ^{\mu})
	\begin{pmatrix}
		0_n & I_n
	\end{pmatrix}^{\top}\overline{\p_x Z}
	\nonumber
	\\
	&\quad 
	+(\nu^5-\mu^5)\p_x^6Q^{\nu}
	+\mu^5
	O(|Q^{\mu}||\p_xQ^{\mu}||\p_xZ|)
	+r_2, 
	\nonumber
\end{align}
where $J_2(Q^{\mu})$, $K_2(Q^{\mu})$, 
$H_2(Q^{\mu},\p_xQ^{\mu})$, 
$S_2(Q^{\mu},\p_xQ^{\mu})$, 
and $\mathcal{K}_{2,S}^{\mathrm{off}}(Q^{\mu},\p_xQ^{\mu})$ 
have been determined below \eqref{eq:sys_Wep} with 
$(Q^{\ep}, m)$ replaced by $(Q^{\mu}, 2)$. 
By using the properties of these matrix-valued functions 
and \eqref{eq:92415}, 
we can deduce 
\begin{align}
&\dfrac{d}{dt}
\left[
\frac{1}{2}
\|
\p_x^2W(t)
+M^{-1}\Lambda(Q^{\mu})\overline{W}(t)
-M^{-1}\Lambda_{2,2}(Q^{\mu})W(t)
\|_{L^2}^2
\right]
=\operatorname{Re}
\langle
\p_tZ,Z
\rangle
\nonumber
\\
&\leqslant 
-\dfrac{\mu^5}{2}\|\p_x^2Z\|_{L^2}^2
+C(\|Q^{\mu}(t)\|_{H^4}, \|Q^{\nu}(t)\|_{H^4})
\mathcal{E}_2^{\mu,\nu}(W(t))
+(\nu^5-\mu^5)\|Q^{\nu}(t)\|_{H^6}\|Z(t)\|_{L^2}
\nonumber
\\
&\quad 
+\operatorname{Re}
\langle
\begin{pmatrix}
	I_n & 0_n
\end{pmatrix}
\mathcal{K}_{2,S}^{\mathrm{off}}(Q^{\mu},\p_xQ^{\mu})
\begin{pmatrix}
	0_n & I_n
\end{pmatrix}^{\top}\overline{\p_x Z}
,Z
\rangle. 
\label{eq:6122}
\end{align}
\par 
On the other hand, after lengthy computations, 
we can obtain 
\begin{align}
&\dfrac{d}{dt}\left[
-\operatorname{Re}
\langle
\p_xW(t), M^{-1}\Lambda_{1,2}(Q^{\mu},\p_xQ^{\mu})
\overline{W}(t)
\rangle
\right]
\nonumber
\\
&\leqslant 
\dfrac{\mu^5}{4}
\|\p_x^4W\|_{L^2}^2
+C(\|Q^{\mu}(t)\|_{H^5}, \|Q^{\nu}(t)\|_{H^4})
\|W(t)\|_{H^2}^2
\nonumber
\\
&\quad 
+C(T,\|Q_0\|_{H^m},m)
(\nu^5-\mu^5)\|Q^{\nu}(t)\|_{H^5}\|W(t)\|_{H^1}
\nonumber
\\
&\quad 
-\dfrac{1}{2}\operatorname{Re}
\left\langle
\left((B_2-B_2^{\top})(Q^{\mu},\p_xQ^{\mu})
-\p_x
(P_2-P_2^{\top})(Q^{\mu})
\right)
\overline{\p_x Z}
,Z
\right\rangle. 
\label{eq:6123}
\end{align}
Although we omit the details 
because the computations are similar to those 
used to derive (4.19) in \cite{Onodera2025},
we remark that the assumption $m\geqslant 5$ 
is used in deriving the above estimate.
Moreover, the last terms of \eqref{eq:6122} and \eqref{eq:6123}
cancel each other out, since 
\begin{align}
\mathcal{K}_{2,S}^{\mathrm{off}}
&=
\dfrac{1}{2}
\begin{pmatrix}
0_n & B_2-B_2^{\top} \\
\overline{B_2-B_2^{\top}} & 0_n
\end{pmatrix}
-\dfrac{1}{2}
\begin{pmatrix}
0_n & \p_x(P_2-P_2^{\top}) \\
\overline{\p_x(P_2-P_2^{\top})} & 0_n
\end{pmatrix}
\nonumber
\end{align}
by the definition of $\mathcal{K}(Q,\p_xQ)$ 
which is given by \eqref{eq:K_m}. 
This relation will be explained in more detail in 
Section~\ref{sec:LWP4}
(see \eqref{eq:KmSoff}). 
\par 
Combining \eqref{eq:6122}-\eqref{eq:6123} and the estimate 
of  time-derivative of $A\|W(t)\|_{H^1}^2$, and using 
\eqref{eq:92415}, 
we derive 
\begin{align}
	\dfrac{d}{dt}
	\mathcal{E}_2^{\mu,\nu}(W(t))
	&\leqslant 
	C(T,\|Q_0\|_{H^m},m)(\nu^5-\mu^5)
	\|Q^{\nu}(t)\|_{H^6}\|W(t)\|_{H^1}
	\nonumber
	\\
	&\quad 
	+C(T, \|Q_0\|_{H^m}, m,\|Q^{\mu}(t)\|_{H^5}, \|Q^{\nu}(t)\|_{H^4})
	\mathcal{E}_2^{\mu,\nu}(W(t)). 
	\nonumber
\end{align}
Here, by the first inequality of \eqref{eq:H153} for $m=6$ 
and \eqref{eq:6092} for $(\ep,m,\ell)=(\nu,5,1)$,  
$$
\|Q^{\nu}\|_{C([0,T];H^6)}
\leqslant 
C(T,\|Q_0\|_{H^5})\nu^{-1}\|Q_0\|_{H^5}.
$$ 
Using this, \eqref{eq:92415} and \eqref{eq:H153}, 
we have   
\begin{align}
\dfrac{d}{dt}
\mathcal{E}_2^{\mu,\nu}(W(t))
&\leqslant 
C(T, \|Q_0\|_{H^m},m)
\left\{
\mathcal{E}_2^{\mu,\nu}(W(t))
+
\nu^4
\left(\mathcal{E}^{\mu,\nu}_2(W(t))\right)^{1/2}
\right\}. 
\label{eq:u385}
\end{align} 
The Gronwall inequality and \eqref{eq:92415} implies   
\begin{align}
\|W(t)\|_{H^2}
&\leqslant
C(T,\|Q_0\|_{H^m},m)
(\|W(0)\|_{H^2}+\nu^4). 
\nonumber
\end{align}
By the triangle inequality 
$\|W(0)\|_{H^2}=
\|Q^{\mu}_0-Q^{\nu}_0\|_{H^2}
\leqslant 
\|Q^{\mu}_0-Q_0\|_{H^2}
+\|Q_0-Q^{\nu}_0\|_{H^2}$, 
\eqref{eq:6093} with $\ell=m-2$ and $\ep=\mu,\nu$, 
and by $0<\mu\leqslant \nu<1$, 
we derive 
\begin{align}
\|W\|_{C([0,T];H^2)}
&
\leqslant 
2C(T,\|Q_0\|_{H^m},m)(\nu^{m-2}+\nu^4),  
\label{eq:90312}
\end{align}
which is the desired \eqref{eq:1cauchy}. 
\end{proof}
\begin{remark} 
	\label{remark:ME}
	By analogy with   
		\eqref{eq:Wvar} with $(W,2)$ in place of 
		$(\p_x^mQ^{\ep}, m)$, 
	it is natural that the 
	following function is expected to play an essential 
	role to eliminate the loss of derivatives arising from 
	the system satisfied by $\p_x^2W$:  
	$$\p_x^2W
	+M^{-1}\Lambda(Q^{\mu})\overline{W}
	+M^{-1}\Lambda_{1,2}(Q^{\mu},\p_xQ^{\mu})
	\overline{\p_x^{-1}W}
	-M^{-1}\Lambda_{2,2}(Q^{\mu})W. 
	$$  
	However, the term $\p_x^{-1}W$ in it 
	is not well-defined.    
	On the other hand, a formal computation using integration by parts tells 
	the square of the $L^2$-norm of the above term can be 
	rewritten as
	\begin{align}
		&\left\|
		\p_x^2W
		+M^{-1}\Lambda(Q^{\mu})\overline{W}
		-M^{-1}\Lambda_{2,2}(Q^{\mu})W
		\right\|_{L^2}^2
		\nonumber
		\\
		&\quad 
		-2\operatorname{Re}
		\langle
		\p_xW, M^{-1}\Lambda_{1,2}(Q^{\mu},\p_xQ^{\mu})
		\overline{W}
		\rangle
		+\text{(terms including $\p_x^{-1}W$).}
		\nonumber 
	\end{align}
	We therefore introduce
	$\mathcal E_2^{\mu,\nu}(W)$
	by retaining only the first two terms
	on the right-hand side, which are well-defined. 
	The addition of $A\|W\|_{H^1}^2$, 
	which is motivated by the idea on the choice of  
	modified energies in \cite{Kwon,Segata2012}, 
	ensures the 
	positivity of  $\mathcal{E}_2^{\mu,\nu}(W)$ and the equivalence property \eqref{eq:92415}. 
\end{remark}
\begin{proof}[Proof of \eqref{eq:mcauchy} in Proposition~\ref{proposition:cauchy}]
For $\mu, \nu$ with 
$0<\mu \leqslant \nu<1$, 
we set $W:=Q^{\mu}-Q^{\nu}: [0,T]\times \TT\to \mathbb{C}^n$ again and 
define 
\begin{align}
W_m^{\mu,\nu} 
&:= \p_x^m W
+M^{-1}\Lambda(Q^{\mu})
\overline{\p_x^{m-2}W} 
+M^{-1}\Lambda_{1,m}(Q^{\mu},\p_xQ^{\mu})
\overline{\p_x^{m-3}W} 
\nonumber
\\
&\qquad 
-M^{-1}\Lambda_{2,m}(Q^{\mu})
\p_x^{m-2}W, 
\label{eq:Wmmn}
\end{align}
where $\Lambda$, $\Lambda_{1,m}$ and  $\Lambda_{2,m}$ are matrix-valued 
functions determined by \eqref{eq:Lambda}, \eqref{eq:Lambda1m}, and \eqref{eq:Lambda2m} 
respectively with $Q$ replaced by $Q^{\mu}$.
We define  
$\mathcal{E}^{\mu,\nu}_m(W)
=\mathcal{E}^{\mu,\nu}_m(W(t))$ 
by 
\begin{align}
\mathcal{E}^{\mu,\nu}_m(W(t))
&=
\sqrt{
\|W_m^{\mu,\nu}(t)\|_{L^2}^2+\|W(t)\|_{H^{m-1}}^2
}. 
\label{eq:Ekl}
\end{align}
There exists  
$C_2(T,\|Q_0\|_{H^m},m)>0$ 
which is independent of $\mu$ and $\nu$ such that 
 \begin{align}
 &\frac{1}{C_2}\|W(t)\|_{H^m}^2
 \leqslant 
 \mathcal{E}^{\mu,\nu}_m(W(t))^2
 \leqslant 
 C_2\|W(t)\|_{H^m}^2
 \quad 
 \text{on $[0,T]$.}
 \label{eq:H4Em}
 \end{align}
 Using $M=a\,I_n$, 
 the structural conditions 
 ($\widetilde{C1}$)- ($\widetilde{C2}$) with $m\geqslant 5$ 
 for $Q^{\mu}$, 
 and using $m\geqslant 5$ as above, 
 it is now not difficult to see the following estimate holds:
\begin{align}
\frac{1}{2}\dfrac{d}{dt}
\|W_m^{\mu,\nu}(t)\|_{L^2}^2
&\leqslant 
C(T,\|Q_0\|_{H^m},m)\mathcal{E}^{\mu,\nu}_m(W(t))^2
\nonumber
\\
&\quad 
+C(T,\|Q_0\|_{H^m},m)
\|W(t)\|_{H^1}
\|Q^{\nu}(t)\|_{H^{m+2}}
\|W_m^{\mu,\nu}(t)\|_{L^2}
\nonumber
\\
&\quad
+C(T,\|Q_0\|_{H^m},m)
(\nu^5-\mu^5)
\|Q^{\nu}(t)\|_{H^{m+4}}
\|W_m^{\mu,\nu}(t)\|_{L^2}.
\label{eq:9016}
\end{align} 
Further, by combining the first inequality of 
\eqref{eq:H153} with $m$ replaced by $m+j$ for 
$j=1,2,\ldots$ and \eqref{eq:6092},  
\begin{align}
\|Q^{\nu}\|_{C([0,T];H^{m+j})}
&\leqslant 
C(T,\|Q_0\|_{H^m},m)
\nu^{-j}
\quad 
(j=1,2,\ldots).
\label{eq:90162}
\end{align}
Combining \eqref{eq:90162} for $j=2,4$ and  
\eqref{eq:90312}, 
and then using \eqref{eq:H4Em} and $\nu\in (0,1)$, 
we derive 
\begin{align}
\frac{d}{dt}\|W_{m}^{\mu,\nu}(t)\|_{L^2}^2
\leqslant 
C(T,\|Q_0\|_{H^m},m)
\left\{
\mathcal{E}^{\mu,\nu}_m(W(t))^2
+
(\nu^{m-4}+\nu)
\mathcal{E}^{\mu,\nu}_m(W(t))
\right\}. 
\nonumber
\end{align} 
Combining this and the estimate for 
the time-derivative of $\|W(t)\|_{H^{m-1}}^2$, 
we obtain 
\begin{align}
&\frac{d}{dt}
\mathcal{E}^{\mu,\nu}_m(W)^2
\leqslant 
C(T,\|Q_0\|_{H^m},m)
\left\{
\mathcal{E}^{\mu,\nu}_m(W)^2
+
(\nu^{m-4}+\nu)
\mathcal{E}^{\mu,\nu}_m(W)
\right\}
\nonumber 
\end{align} 
holds on $[0,T]$. 
By the Gronwall inequality 
and \eqref{eq:H4Em}, 
this shows   
$$
\|W\|_{C([0,T];H^m)}
\leqslant 
C(T,\|Q_0\|_{H^m},m)
\left(
\nu^{m-4}
 +\nu
+\|W(0)\|_{H^m}
\right),  
$$
which is the desired \eqref{eq:mcauchy}.
\end{proof}
\par 
Once Proposition~\ref{proposition:cauchy} is derived, 
it is straightforward to complete 
the proof of local well-posedness 
in the same way as that in \cite{Onodera2025}: 
By \eqref{eq:mcauchy}, 
a time-local solution is constructed 
as the limit of $\left\{Q^{\ep}\right\}_{\ep\in (0,1)}$ 
in $C([0,T];H^m(\TT;\CC^n))$.  
The uniqueness is proved by the estimate for 
the modified energy 
\begin{align}
\mathcal{E}_2(W(t))
&=
\frac{1}{2}
\|
\p_x^2W(t)
+M^{-1}\Lambda(Q^{1})\overline{W}(t)
-M^{-1}\Lambda_{2,2}(Q^{1})W(t)
\|_{L^2}^2
\nonumber
\\
&\quad 
-\operatorname{Re}
\langle
\p_xW(t), M^{-1}\Lambda_{1,2}(Q^{1},\p_xQ^{1})
\overline{W}(t)
\rangle
+A\, \|W(t)\|_{H^1}^2, 
\label{eq:5314}
\end{align}
where $W=Q^1-Q^2$ for solutions $Q^1$ and $Q^2$ 
to \eqref{eq:IVP} with same initial data, 
$\Lambda_1(Q^{1})$, 
$\Lambda_{1,2}(Q^{1},\p_xQ^{1})$ and 
$\Lambda_{2,2}(Q^{1})$ are determined by 
\eqref{eq:Lambda}, \eqref{eq:Lambda1m},   
and \eqref{eq:Lambda2m} respectively 
with $(Q,m)$ replaced by $(Q^{1},2)$, 
and  
$A=A(\|Q^1\|_{C([0,T];H^5)})$ is taken to be sufficiently 
large to ensure the equivalence of  $\mathcal{E}_2(W(t))$ 
and $\|W(t)\|_{H^2}$ on the time-interval $[0,T]$. 
Indeed, by imposing $M=a\,I_n$, 
the structural conditions 
($\widetilde{C1}$)-($\widetilde{C2}$) 
with $m=2$ for $Q^1$,  
the same argument to derive \eqref{eq:90312} 
from \eqref{eq:5313} 
but with $\mu=\nu=0$ works under $m\geqslant 5$, 
which shows $W=0$. 
The data-to-solution map is continuous 
from $H^m$ to $C([0,T'];H^m(\TT;\CC^n))$ for any $T' \in (0,T)$, 
which is proved by a standard 
argument based on \eqref{eq:mcauchy}. 
We omit the detail.
\par 
Summarizing the above discussion, 
we obtain the following proposition. 
\begin{proposition}
\label{pro:framework}
The initial value problem \eqref{eq:IVP} is time-locally well-posed 
in $H^m(\TT;\CC^n)$ for any integer $m\geqslant 5$ in the sense of 
Theorem~\ref{thm:main}, 
provided that $M=a\,I_n$ for some 
$a\in \mathbb{R} \setminus \{0\}$, 
the conditions ($\widetilde{C1}$)-($\widetilde{C2}$) 
with $m\geqslant 5$
for  $Q^{\ep}$ and $Q^{\mu}$,  
and the same conditions  with $m=2$ 
for $Q^{\mu}$ and $Q^1$ 
hold in the above discussion in this subsection.  
\end{proposition}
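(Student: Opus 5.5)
Proposition~\ref{pro:framework} is essentially a summary of the argument carried out in this subsection, so the plan is to assemble the pieces in the standard Bona--Smith order and check that each step only uses $M=a\,I_n$ together with ($\widetilde{C1}$)--($\widetilde{C2}$) at the indicated values of $m$.

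\emph{Step 1: uniform bounds.} For $Q_0\in H^m$, $m\geqslant5$, take the Bona--Smith data $Q_0^\ep$ and the smooth solutions $Q^\ep$ of \eqref{eq:bpde}--\eqref{eq:bdata}. Using ($\widetilde{C1}$)--($\widetilde{C2}$) with $m\geqslant5$ for $Q^\ep$, I derive the system \eqref{eq:sys_Wep} for $W_m^\ep$ and from it the energy inequality \eqref{eq:9248}. The inequality for $\mathcal{E}_5$ together with the continuity argument through $T^\star_\ep$ in \eqref{eq:cuttime} gives a time $T=T(\|Q_0\|_{H^5})$ independent of $\ep$. The same estimate at level $m$ then yields \eqref{eq:H153}. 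Here the parabolic term $\ep^5\p_x^4$ controls the commutator errors, so no smallness in $\ep$ is needed.

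\emph{Step 2: Cauchy property.} Proposition~\ref{proposition:cauchy} gives \eqref{eq:mcauchy}. Combined with $\|Q_0^\mu-Q_0^\nu\|_{H^m}\to0$, this shows that $\{Q^\ep\}$ is Cauchy in $C([0,T];H^m)$. Its limit $Q$ solves \eqref{eq:IVP}: pass to the limit in the Duhamel or distributional form, using convergence in $C([0,T];H^{m})\subset C([0,T];C^2)$. Time reversibility handles $[-T,0]$.

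\emph{Step 3: uniqueness.} This is the main obstacle, because two solutions are only in $C([-T,T];H^m)$ and neither has the extra regularity the $\mu$-argument used, such as $\|Q^\mu\|_{H^5}$ in \eqref{eq:6123}. For solutions $Q^1,Q^2$ with equal data, set $W=Q^1-Q^2$ and use the modified energy \eqref{eq:5314} with $A$ large, which gives equivalence with $\|W\|_{H^2}^2$ via \eqref{eq:92415}. I then repeat the computation for \eqref{eq:6122}--\eqref{eq:6123} with $\mu=\nu=0$. Since $m\geqslant5$, $Q^1\in C([0,T];H^5)$, so all coefficients, including $\p_t\Lambda_{1,2}(Q^1,\p_xQ^1)$, are bounded. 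The cross terms involving $\mathcal{K}^{\mathrm{off}}_{2,S}$ cancel exactly as before. The step to check carefully is that the time derivatives of the coefficients can be expressed through the equation using only $H^5$ control. Gronwall with $\mathcal{E}_2(W(0))=0$ then gives $W\equiv0$.

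\emph{Step 4: continuous dependence.} Given $\widetilde Q_0$ close to $Q_0$ in $H^m$, the existence time depends only on the $H^5$ norm, so the solutions exist on $[-T',T']$ uniformly. I split
\[
\|Q-\widetilde Q\|_{C([0,T'];H^m)}\leqslant\|Q-Q^\ep\|+\|Q^\ep-\widetilde Q^\ep\|+\|\widetilde Q^\ep-\widetilde Q\|.
\]
The first and third terms are small uniformly, by \eqref{eq:mcauchy} with $\mu\to0$ and $\nu=\ep$, using that $\|\widetilde Q_0^\ep-\widetilde Q_0\|_{H^m}$ is controlled uniformly near $Q_0$ by the Bona--Smith property. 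The middle term involves smooth solutions at fixed $\ep$ and is handled by an $H^m$ difference estimate with constants depending on $\ep$ through higher norms, which are finite. Choosing $\ep$ first and then $\delta$ completes the proof.
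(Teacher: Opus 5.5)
Your proposal is correct and follows essentially the same route as the paper: the $\ep$-uniform bound \eqref{eq:H153} via $\mathcal{E}_m$ and the cut-off time $T^\star_\ep$, then the Cauchy estimates of Proposition~\ref{proposition:cauchy}, then uniqueness from the modified energy \eqref{eq:5314} with $\mu=\nu=0$, and finally continuous dependence by the standard Bona--Smith splitting, which the paper only sketches. One small precision in Step 3: with only $Q^1\in C([0,T];H^5)$, the coefficient $\p_t\Lambda_{1,2}(Q^1,\p_xQ^1)$ contains $\p_x^5Q^1$, so it is bounded in $L^2_x$ rather than pointwise; this suffices because it enters only through $\langle \p_xW, M^{-1}\p_t\Lambda_{1,2}\,\overline{W}\rangle$ with $W\in H^2\subset L^\infty$.
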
  
\section{Structural conditions (A2) and (C1)--(C2) to 
prove Theorem~\ref{thm:main} under $m\geqslant 5$}
\label{sec:A2C1C2}
In this section, we verify that the structural conditions 
(A2) and (C1)--(C2) ensure the key properties 
introduced in Section~\ref{sec:proof}, 
thereby completing the proof of 
Theorem~\ref{thm:main}.
\begin{proof}[Proof of Theorem~\ref{thm:main}]
Proposition~\ref{pro:framework} assumes the conditions 
($\widetilde{C1}$)-($\widetilde{C2}$) 
with $m\geqslant 5$
for $Q^{\ep}$ and $Q^{\mu}$,  
and with $m=2$ for $Q^{\mu}$ and $Q^1$.  
Thus, it suffices to show that these requirements are ensured 
by the structural conditions
 (A2) and (C1)--(C2). 
\par 
In what follows, we use the notation 
$(v)_j$ to denote the $j$-th component of 
$\CC^n$-valued functions $v=v(t,x)$, 
and express $n\times n$ matrix-valued functions $A=A(t,x)$ 
by $A=((A)_{jr})_{1\leqslant j,r\leqslant n}$ 
where $(A)_{jr}$ denotes the $(j,r)$-component of $A$.   
\par 
We first investigate the relationship between 
($\widetilde{C1}$) and (C1). 
For this purpose, we obtain 
the exact expression 
of $\mathcal{R}_H^{\mathrm{diag}}(Q)$ for 
$\CC^n$-valued functions $Q$ on 
$I_T\times \TT$.   
Recalling the definition of $\mathcal{R}(Q)$ 
which is given below 
\eqref{eq:matrix_system}
and the skew-Hermitian property $M_S^{*}=-M_S$, 
we obtain 
\begin{align}
\mathcal{R}_H(Q)
&=
\dfrac{1}{2}(\mathcal{R}(Q) +\mathcal{R}^{*}(Q) )
\nonumber
\\
&=
\dfrac{1}{2}
\left\{
\begin{pmatrix}
M_S+P_1(Q) & P_2(Q) \\
\overline{P_2(Q)} & \overline{M_S+P_1(Q)}
\end{pmatrix}
+
\begin{pmatrix}
M_S^*+P_1^*(Q) & \overline{P_2^*(Q)} \\
P_2^*(Q) & \overline{M_S^*+P_1^*(Q)}
\end{pmatrix}
\right\}
\nonumber
\\
&=
\dfrac{1}{2}
\begin{pmatrix}
P_1(Q)+P_1^*(Q) & P_2(Q)+P_2^{\top}(Q) \\
\overline{P_2(Q)+P_2^{\top}(Q)} & \overline{P_1(Q)+P_1^*(Q)}
\end{pmatrix}, 
\label{eq:Rh}
\\
\mathcal{R}_H^{\mathrm{diag}}(Q)
&=
\dfrac{1}{2}
\begin{pmatrix}
P_1(Q)+P_1^*(Q) & 0_n \\
0_n & \overline{P_1(Q)+P_1^*(Q)}
\end{pmatrix}.
\label{eq:imp1}
\end{align}
The $j$-th component of 
$P_1(Q)\p_x^2U$, which is written as $(P_1(Q)\p_x^2U)_j$ 
in our rule of notation, 
equals the 
term which includes $\p_x^2U$
in $\p_x^m
\left\{
F_j(Q,\p_xQ,\p_x^2Q)
\right\}$, 
which is nothing but  
\[
\sum_{p,q,r=1}^n
\omega_{p,q,r}^{1,j}\p_x^2U_p\overline{Q_q}Q_r
=
\sum_{p,q,r=1}^n
\omega_{r,q,p}^{1,j}Q_p\overline{Q_q}\p_x^2U_r. 
\]
Hence we obtain 
\begin{equation}
P_1(Q)
=
\left(\sum_{p,q=1}^n
\omega_{r,q,p}^{1,j}Q_p\overline{Q_q}
\right)_{1\leqslant j,r\leqslant n}. 
\label{eq:P1}
\end{equation} 
By \eqref{eq:imp1}, 
the condition ($\widetilde{C1}$) 
is equivalent to $0_n=P_1(Q)+P_1^*(Q)$, 
that is
\begin{align}
(\widetilde{C1})
\iff 
0_n
&=
\left(\sum_{p,q=1}^n
\omega_{r,q,p}^{1,j}Q_p\overline{Q_q}
\right)_{1\leqslant j,r\leqslant n}
+
\left(\sum_{p,q=1}^n
\overline{\omega_{j,q,p}^{1,r}}\overline{Q_p}Q_q
\right)_{1\leqslant j,r\leqslant n}
\nonumber
\\
&=
\left(\sum_{p,q=1}^n
\left(\omega_{r,q,p}^{1,j}
+
\overline{\omega_{j,p,q}^{1,r}}
\right)
Q_p\overline{Q_q}
\right)_{1\leqslant j,r\leqslant n}.   
\nonumber
\end{align}
This shows that ($\widetilde{C1}$), 
which eliminates the problematic second-order terms, 
is ensured by (C1).
\par 
We next investigate the relationship between 
($\widetilde{C2}$) and (C2). 
To this end, we recall \eqref{eq:K_m}.
Since the second term on the right-hand side
is already expressed as a spatial derivative
of $\mathcal{R}(Q)$,
the condition ($\widetilde{C2}$) is 
equivalent to that 
$\begin{pmatrix}
	I_n & 0_n
\end{pmatrix}
\mathcal{R}_{m,S}^{\mathrm{diag}}(Q,\p_xQ)
\begin{pmatrix}
	I_n & 0_n
\end{pmatrix}^{\top}$
is expressed as a spatial derivative of an 
$n\times n$ matrix-valued function of order $|Q|^2$ 
defined on 
$I_T\times \TT$. 
The definition of $\mathcal{R}_{m}=\mathcal{R}_{m}(Q,\p_xQ)$ 
and 
$\mathcal{R}_{m,S}^{\mathrm{diag}}
=\mathcal{R}_{m,S}^{\mathrm{diag}}(Q,\p_xQ)$ 
yields  
\begin{align}
\mathcal{R}_{m,S}
&
=\dfrac{1}{2}(\mathcal{R}_{m}-\mathcal{R}_{m}^*)
=
\dfrac{1}{2}
\left\{
\begin{pmatrix}
A_m & B_m \\
\overline{B_m} & \overline{A_m}
\end{pmatrix}
-\begin{pmatrix}
A_m^* &  \overline{B_m^*}\\
B_m^* & \overline{A_m^*}
\end{pmatrix}
\right\}
\nonumber
\\
&=
\dfrac{1}{2}
\begin{pmatrix}
A_m-A_m^* & B_m-B_m^{\top} \\
\overline{B_m-B_m^{\top}} & \overline{A_m-A_m^*}
\end{pmatrix},  
\label{eq:Rms}
\\
\mathcal{R}_{m,S}^{\mathrm{diag}}
&=
\dfrac{1}{2}
\begin{pmatrix}
A_m-A_m^* & 0_n \\
0_n & \overline{A_m-A_m^*}
\end{pmatrix},   
\nonumber
\end{align}
from which we have 
\begin{equation}
\begin{pmatrix}
	I_n & 0_n
\end{pmatrix}
\mathcal{R}_{m,S}^{\mathrm{diag}}(Q,\p_xQ)
\begin{pmatrix}
	I_n & 0_n
\end{pmatrix}^{\top}
=\dfrac{1}{2}(A_m(Q,\p_xQ)-A_m^*(Q,\p_xQ)).
\label{eq:6181}
\end{equation}
Since $(A_m(Q,\p_xQ)\p_xU)_j$ equals the sum of 
the terms in $\p_x^m\left\{F_j(Q,\p_xQ,\p_x^2Q)\right\}$ 
which include $\p_xU$,  
\begin{align}
(A_m(Q,\p_xQ)\p_xU)_j
&=
m\,
\sum_{p,q,r=1}^n
\omega^{1,j}_{p,q,r}\, \p_x U_p\, 
\p_x(\overline{Q_q}\, Q_r)
+ 
\sum_{p,q,r=1}^n
\omega^{3,j}_{p,q,r}\, \p_x U_p\, \overline{\p_x Q_q}\, Q_r
\nonumber
\\
&\quad 
+ 
\sum_{p,q,r=1}^n
\omega^{4,j}_{p,q,r}\, \p_x U_p\, \p_xQ_q\, \overline{Q_r}
+ 
\sum_{p,q,r=1}^n
\omega^{4,j}_{p,q,r}\, \p_x Q_p\, \p_xU_q\, \overline{Q_r}
\nonumber
\\
&=
m\,
\sum_{p,q,r=1}^n
\omega^{1,j}_{r,q,p}\,  
\p_x(Q_p\, \overline{Q_q})
\,\p_x U_r
+ 
\sum_{p,q,r=1}^n
\omega^{3,j}_{r,q,p}
\, Q_p\, \overline{\p_x Q_q}
\, \p_x U_r
\nonumber
\\
&\quad 
+ 
\sum_{p,q,r=1}^n
(\omega^{4,j}_{r,p,q}+\omega^{4,j}_{p,r,q})
\, \p_xQ_p\,\overline{Q_q}
\,\p_x U_r.
\nonumber
\end{align} 
This shows 
\begin{align}
(A_m(Q,\p_xQ))_{jr}
&=
\sum_{p,q=1}^n
\left\{
m\,\omega^{1,j}_{r,q,p}\,  
\p_x(Q_p\, \overline{Q_q})
+
\omega^{3,j}_{r,q,p}
\, Q_p\, \overline{\p_x Q_q}
+
(\omega^{4,j}_{r,p,q}+\omega^{4,j}_{p,r,q})
\, \p_xQ_p\,\overline{Q_q}
\right\}. 
\nonumber
\end{align}
Therefore, a simple computation yields 
\begin{align}
&(A_m(Q,\p_xQ)-A_m^*(Q,\p_xQ))_{jr}
\nonumber
\\
&=
m\,\sum_{p,q=1}^n
\omega^{1,j}_{r,q,p}\,  
\p_x(Q_p\, \overline{Q_q})
-
m\,\sum_{p,q=1}^n
\overline{\omega^{1,r}_{j,q,p}}\,  
\p_x(\overline{Q_p}\, Q_q)
\nonumber
\\
&\quad
+
\sum_{p,q=1}^n
\omega^{3,j}_{r,q,p}
\, Q_p\, \overline{\p_x Q_q}
-\sum_{p,q=1}^n
\overline{\omega^{3,r}_{j,q,p}}
\, \overline{Q_p}\, \p_x Q_q
\nonumber
\\
&\quad 
+\sum_{p,q=1}^n
(\omega^{4,j}_{r,p,q}+\omega^{4,j}_{p,r,q})
\, \p_xQ_p\,\overline{Q_q}
-\sum_{p,q=1}^n
(\overline{\omega^{4,r}_{j,p,q}}+
\overline{\omega^{4,r}_{p,j,q}})
\, \overline{\p_xQ_p}\,Q_q
\nonumber 
\\
&=
\p_x\left(
m
\,\sum_{p,q=1}^n
(\omega^{1,j}_{r,q,p}-
\overline{\omega^{1,r}_{j,p,q}}
)
\,  
Q_p\, \overline{Q_q}
\right)
+
\sum_{p,q=1}^n
(\omega^{3,j}_{r,q,p}
-\overline{\omega^{4,r}_{j,q,p}}
-
\overline{\omega^{4,r}_{q,j,p}}
)
\, Q_p\, \overline{\p_x Q_q}
\nonumber
\\
&\quad 
-\sum_{p,q=1}^n
(
\overline{\omega^{3,r}_{j,p,q}}
-\omega^{4,j}_{r,p,q}-\omega^{4,j}_{p,r,q}
)
\p_xQ_p\overline{Q_q}. 
\nonumber
\end{align}
Under the assumption (A2), the sum of the second and the third 
terms of the right-hand side becomes 
\[
\sum_{p,q=1}^n
(\omega^{3,j}_{r,q,p}
-2\overline{\omega^{4,r}_{j,q,p}}
)
\, Q_p\, \overline{\p_x Q_q}
-\sum_{p,q=1}^n
(
\overline{\omega^{3,r}_{j,p,q}}
-2\omega^{4,j}_{r,p,q}
)
\p_xQ_p\overline{Q_q}.
\]
Furthermore, under the additional assumption (C2), 
this is rewritten as 
\[
\sum_{p,q=1}^n
(\omega^{3,j}_{r,q,p}
-2\overline{\omega^{4,r}_{j,q,p}}
)
\, 
(Q_p\, \overline{\p_x Q_q}
+
\p_xQ_p\overline{Q_q}
)
=
\p_x\left(
\sum_{p,q=1}^n
(\omega^{3,j}_{r,q,p}
-2\overline{\omega^{4,r}_{j,q,p}}
)
Q_p\overline{Q_q}
\right).
\]
Thus, 
$\begin{pmatrix}
	I_n & 0_n
\end{pmatrix}
\mathcal{R}_{m,S}^{\mathrm{diag}}(Q,\p_xQ)
\begin{pmatrix}
	I_n & 0_n
\end{pmatrix}^{\top}$
is expressed as the spatial derivative of
\begin{equation}
\left(
\dfrac{1}{2}
\,\sum_{p,q=1}^n
\left\{
m
(\omega^{1,j}_{r,q,p}-
\overline{\omega^{1,r}_{j,p,q}}
)
+(\omega^{3,j}_{r,q,p}
-2\overline{\omega^{4,r}_{j,q,p}}
)
\right\}
\,  
Q_p\, \overline{Q_q}
\right)_{1\leqslant j,r\leqslant n}, 
\label{eq:RmSdiagp}
\end{equation}
which is actually an $n\times n$ matrix-valued function 
of order $|Q|^2$
on $I_T\times \TT$. 
This concludes that 
($\widetilde{C2}$) with $m\geqslant 5$ 
and with $m=2$ 
for any of $Q^{\ep}, Q^{\mu}$, and $Q^1$
are ensured by (A2) and (C2).
\end{proof}
\section{Structural conditions from the viewpoint of linear dispersive systems}
\label{sec:linear}
In this section, we interpret the structural conditions 
($\widetilde{C1}$)-($\widetilde{C2}$) 
from the viewpoint of the $L^2$ theory for linear dispersive systems. 
\par 
The following initial value problem for 
a system of linear fourth-order dispersive partial differential equations 
was investigated by Chihara in \cite{Chihara2015}:  
\begin{alignat}{2}
	\left(I_2\p_t+iP\right)\vec{u}
	& =
	\vec{f}(t,x) 
	& \quad
	& \text{in}
	\quad
	\mathbb{R}\times\mathbb{T},
	\label{eq:lpde}      
	\\
	\vec{u}(0,x)
	& =
	\vec{\phi}(x)
	& \quad
	& \text{in}
	\quad
	\mathbb{T},
	\label{eq:ldata}
\end{alignat}
where 
$$
P= ED_x^4+A(x)D_x^3+B(x)D_x^2+C(x)D_x+D(x), 
$$
$\vec{u}(t,x)=(u_1(t,x),u_2(t,x))^{\top}$ 
is a $\mathbb{C}^2$-valued unknown function of 
$(t,x)\in\mathbb{R}\times\mathbb{T}$, 
$D_x=-i\p_x$, 
$\vec{\phi}(x)=(\phi_1(x),\phi_2(x))^{\top}$ 
and 
$\vec{f}(t,x)=(f_1(t,x),f_2(t,x))^{\top}$ 
are given functions, 
$$
E
=
\begin{pmatrix}
	1 & 0 \\ 0 & -1
\end{pmatrix},
\quad 
A(x)
=
\begin{pmatrix}
	a_{11}(x) & a_{12}(x) \\ a_{21}(x) & a_{22}(x)
\end{pmatrix},
\quad
B(x)
=
\begin{pmatrix}
	b_{11}(x) & b_{12}(x) \\ b_{21}(x) & b_{22}(x)
\end{pmatrix},
$$
$$
C(x)
=
\begin{pmatrix}
	c_{11}(x) & c_{12}(x) \\ c_{21}(x) & c_{22}(x)
\end{pmatrix},
\quad
D(x)
=
\begin{pmatrix}
	d_{11}(x) & d_{12}(x) \\ d_{21}(x) & d_{22}(x)
\end{pmatrix},
$$
$a_{jk}(x), b_{jk}(x), c_{jk}(x) \in C^\infty(\mathbb{T})$, 
and 
$C^\infty(\mathbb{T})$ is the set of all complex-valued smooth functions on $\mathbb{T}$. 
He characterized the necessary and sufficient conditions on $A(x), B(x)$ and on $C(x)$ 
for \eqref{eq:lpde}-\eqref{eq:ldata} 
to be $L^2$-well-posed. 
Restricting to the case $A(x) \equiv 0$ with vanishing third-order terms, 
the necessary and sufficient conditions reduce to the following form:
\begin{equation}
	\operatorname{Im}\int_{\TT}b_{11}(x)\,dx=\operatorname{Im}\int_{\TT}b_{22}(x)\,dx
	=\operatorname{Im}\int_{\TT}c_{11}(x)\,dx=\operatorname{Im}\int_{\TT}c_{22}(x)\,dx=0. 
	\label{eq:NScond}
\end{equation}
\par 
On the other hand, \eqref{eq:lpde} under the assumptions
$A(x)\equiv 0$ and $D(x)\equiv 0$ can be 
rewritten as a system 
satisfied by $U(t,x):=(u_2(t,x),\overline{u_1(t,x)})^{\top}$, 
which is formulated by 
\[
\left(I_2\p_t-iI_2\p_x^4 \right)U
-B_1(x)\p_x^2U
-B_2(x)\overline{\p_x^2U}
+C_1(x)\p_x U
+C_2(x)\overline{\p_x U}
 =
F(t,x), 
\]
where 
\begin{align}
B_1(x)
&=
\begin{pmatrix}
		i\, b_{22}(x) & 0 \\ 0 & \overline{i\, b_{11}(x)}
\end{pmatrix}, 
\quad 
B_2(x)
=
\begin{pmatrix}
	0 & i\, b_{21}(x)  \\ \overline{i\, b_{12}(x)} & 0
\end{pmatrix},
\nonumber
\\
C_1(x)
&=
\begin{pmatrix}
	c_{22}(x) & 0 \\ 0 & \overline{c_{11}(x)}
\end{pmatrix}, 
\quad 
C_2(x)
=
\begin{pmatrix}
	0 &  c_{21}(x)  \\ \overline{c_{12}(x)} & 0
\end{pmatrix},
\nonumber
\end{align}
and 
$F(t,x)=(f_2(t,x), \overline{f_1(t,x)})^{\top}$. 
By coupling the system with its complex conjugate, 
we have the following $4$-component system 
\begin{equation}
	\label{eq:matrix_l_system}
	\left(I_4\p_t
	-\begin{pmatrix}
		iI_2 & 0_n \\
		0_n & \overline{iI_2}
	\end{pmatrix}\p_x^4
	- \mathcal{B}(x) \p_x^2
	+ \mathcal{C}(x) \p_x
	\right) \begin{pmatrix}
		U \\
		\overline{U}
	\end{pmatrix}
	=
	\begin{pmatrix}
			F \\
			\overline{F}
		\end{pmatrix} ,
\end{equation}
where 
\[
\mathcal{B}(x) 
=
\begin{pmatrix}
	B_1(x) & B_2(x) \\
	\overline{B_2(x)} & \overline{B_1(x)}
\end{pmatrix},
\quad 
\mathcal{C}(x)
=
\begin{pmatrix}
	C_1(x) & C_2(x) \\
	\overline{C_2(x)} & \overline{C_1(x)}
\end{pmatrix}.
\]
Moreover, 
\begin{align}
\mathcal{B}_H^{\mathrm{diag}}(x)
&=\dfrac{1}{2}
\begin{pmatrix}
	B_1(x)+B_1(x)^* & 0_2 \\
	0_2 & \overline{B_1(x)+B_1(x)^*}
\end{pmatrix}, 
\nonumber
\\
B_1(x)+B_1(x)^*
&=
\begin{pmatrix}
	-2\operatorname{Im}b_{22}(x) & 0 \\
	0 & -2\operatorname{Im}b_{11}(x)
\end{pmatrix}, 
\nonumber
\\
	\mathcal{C}_S^{\mathrm{diag}}(x)
	&=\dfrac{1}{2}
	\begin{pmatrix}
		C_1(x)-C_1(x)^* & 0_2 \\
		0_2 & \overline{C_1(x)-C_1(x)^*}
	\end{pmatrix}, 
	\nonumber
	\\
	\begin{pmatrix}
		I_2 & 0_2
	\end{pmatrix}
	\mathcal{C}_{S}^{\mathrm{diag}}(x)
	\begin{pmatrix}
		I_2 & 0_2
	\end{pmatrix}^{\top}
	&=
	\dfrac{C_1(x)-C_1(x)^*}{2}
	=i\,
	\begin{pmatrix}
		\operatorname{Im}c_{22}(x) & 0 \\
		0 & -\operatorname{Im}c_{11}(x)
	\end{pmatrix}. 
	\nonumber
\end{align}
Comparing \eqref{eq:matrix_system} and \eqref{eq:matrix_l_system}, 
we see that ($\widetilde{C1}$) for $n=2$ corresponds to 
\begin{equation} 
\operatorname{Im}b_{11}(x)=\operatorname{Im}b_{22}(x)=0
\label{eq:NScond1}
\end{equation} 
for \eqref{eq:matrix_l_system}, 
and ($\widetilde{C2}$) for $n=2$ corresponds to 
\begin{equation} 
\operatorname{Im}\int_{\TT} c_{11}(x)\,dx
=\operatorname{Im}\int_{\TT} c_{22}(x)\,dx=0
\label{eq:NScond2}
\end{equation} 
for \eqref{eq:matrix_l_system}. 
Moreover, both 
\eqref{eq:NScond1} and \eqref{eq:NScond2} 
satisfy the necessary and sufficient conditions 
\eqref{eq:NScond}. 
This comparison indicates that the elimination of 
derivative loss in the nonlinear system 
corresponds to the cancellation of certain 
lower-order contributions 
in the associated linear dispersive system.
\par 
Additionally, in the setting of the nonlinear problem
\eqref{eq:IVP},
the diagonal blocks of
$\mathcal{R}_H^{\mathrm{diag}}(Q)$,
whenever nonvanishing,
are necessarily of order $|Q|^2$.
Such terms cannot be expressed as spatial derivatives
of periodic functions.
This suggests that it is unlikely that one can construct
a nonlinearity
$F(Q,\p_xQ,\p_x^2Q)$ 
whose associated linear problem 
satisfies \eqref{eq:NScond} but not \eqref{eq:NScond1}
by merely modifying the coefficients,
without introducing additional terms. 
In this sense, although not rigorously proved,
the conditions (C1)--(C2) under (A2)
are expected to be nearly sharp
within the class of nonlinearities
of the form $F(Q,\p_xQ,\p_x^2Q)$
when $M=aI_n$. 
We also note that the corresponding $L^2$-theory
for linear dispersive systems is currently not available
in the case $n\geqslant 3$.
\section{Structural conditions to prove Corollary~\ref{cor:cor} under $m\geqslant 4$} 
\label{sec:LWP4}
In this section, we present the proof of Corollary~\ref{cor:cor}.  
\begin{proof}[Proof of  Corollary~\ref{cor:cor}]
We show that the additional conditions (A1) 
and (D1)--(D2) eliminate the remaining off-diagonal 
contributions responsible for the first-order
derivative loss,
by implying the vanishing of $\mathcal{K}_{m,S}^{\mathrm{off}}=\mathcal{K}_{m,S}^{\mathrm{off}}(Q,\p_xQ)$ 
with $m\geqslant 4$ for $Q=Q^{\ep}$ and $Q=Q^{\mu}$ 
and the same property 
with $m=2$ for $Q=Q^1$ and $Q=Q^{\mu}$.
Under these conditions,   
both $\Lambda_{1,m}$ 
and $\Lambda_{1,2}$
vanish, 
eliminating the need to estimate the terms 
$\p_t(\Lambda_{1,m})\p_x^{-3}$ and   
$\p_t(\Lambda_{1,2})\p_x^{-3}$ 
in the proof of Theorem~\ref{thm:main}. 
Since the assumption $m\geqslant 5$ in Theorem~\ref{thm:main} 
is imposed to handle such terms, 
the absence of these terms naturally yields  
local well-posedness in $H^m$ for $m\geqslant 4$. 
Additionally, the proof  
reduces to the framework demonstrated in
\cite{Onodera2025}.  
\par 
To begin with, 
recall that
$\mathcal{K}_{m,S}
=\mathcal{R}_{m,S}-\p_x(\mathcal{R}_S)$ 
and $\mathcal{K}_{m,S}^{\mathrm{off}}
=
\mathcal{R}_{m,S}^{\mathrm{off}}
-\p_x(\mathcal{R}_S^{\mathrm{off}})$ 
follow from the definition \eqref{eq:K_m}. 
In the same way as we see \eqref{eq:Rh}, we have 
\begin{align}
\mathcal{R}_S
&=
\dfrac{1}{2}(\mathcal{R}-\mathcal{R}^{*} )
=
\dfrac{1}{2}
\begin{pmatrix}
2M_S+P_1-P_1^* & P_2-P_2^{\top} \\
\overline{P_2-P_2^{\top}} & \overline{2M_S+P_1-P_1^*}
\end{pmatrix}. 
\label{eq:Rs}
\end{align}
From \eqref{eq:Rms} and \eqref{eq:Rs}, 
it follows that 
\begin{align}
\mathcal{K}_{m,S}^{\mathrm{off}}
&=
\dfrac{1}{2}
\begin{pmatrix}
0_n & B_m-B_m^{\top} \\
\overline{B_m-B_m^{\top}} & 0_n
\end{pmatrix}
-\dfrac{1}{2}
\begin{pmatrix}
0_n & \p_x(P_2-P_2^{\top}) \\
\overline{\p_x(P_2-P_2^{\top})} & 0_n
\end{pmatrix}. 
\label{eq:KmSoff}
\end{align}
Hence it suffices to obtain the condition 
\begin{equation}
B_m(Q,\p_xQ)-B_m(Q,\p_xQ)^{\top}
-\p_x(P_2(Q)-P_2^{\top}(Q)) =0 
\label{eq:BmdelP2}
\end{equation}
with $m\geqslant 4$ and also with $m=2$ for  
$\CC^n$-valued function $Q$. 
\par 
Since $(P_2(Q)\overline{\p_x^2U})_j$ is the term in 
$\p_x^m\left\{F_j(Q,\p_xQ,\p_x^2Q)\right\}$ 
which includes $\overline{\p_x^2U}$, 
\[
(P_2(Q)\overline{\p_x^2U})_j
=
\sum_{p,q,r=1}^n
\omega^{2,j}_{p,q,r}\, \overline{\p_x^2U_p}\, Q_q\, Q_r
=
\sum_{p,q,r=1}^n
\omega^{2,j}_{r,q,p}\, 
Q_p\, Q_q\,
\overline{\p_x^2U_r}.
\]
Hence, 
\begin{equation}
P_2(Q)
=
\left(
\sum_{p,q=1}^n
\omega^{2,j}_{r,q,p}\, 
Q_p\, Q_q
\right)_{1\leqslant j,r\leqslant n}, 
\label{eq:P2}
\end{equation}
\begin{equation}
\p_x(P_2(Q)-P_2^{\top}(Q))
=
\left(
\sum_{p,q=1}^n
(\omega^{2,j}_{r,q,p}
-
\omega^{2,r}_{j,q,p}
) 
\p_x(Q_p\, Q_q)
\right)_{1\leqslant j,r\leqslant n}. 
\nonumber
\end{equation}
Since 
$(B_m(Q,\p_xQ)\overline{\p_xU})_j$ equals the sum of 
the terms in $\p_x^m\left\{F_j(Q,\p_xQ,\p_x^2Q)\right\}$ 
which include $\overline{\p_xU}$,  
\begin{align}
(B_m(Q,\p_xQ)\overline{\p_xU})_j
&=
m\sum_{p,q,r=1}^n
\omega^{2,j}_{p,q,r}\, \overline{\p_xU_p}\, 
\p_x(Q_q\, Q_r) 
+ 
\sum_{p,q,r=1}^n
\omega^{3,j}_{p,q,r}\, \p_x Q_p\, 
\overline{\p_x U_q}\, Q_r
\nonumber
\\
&=
m\sum_{p,q,r=1}^n
\omega^{2,j}_{r,q,p}\, \p_x(Q_p\, Q_q)
\,\overline{\p_xU_r} 
+ 
\sum_{p,q,r=1}^n
\omega^{3,j}_{p,r,q}\, \p_x Q_p
\, Q_q\, \overline{\p_x U_r}. 
\nonumber
\end{align}
Hence 
\[
B_m(Q,\p_xQ)
=
\left(
m\,\sum_{p,q=1}^n
\omega^{2,j}_{r,q,p}\, \p_x(Q_p\, Q_q) 
+ 
\sum_{p,q=1}^n
\omega^{3,j}_{p,r,q}
\, \p_x Q_p\, Q_q
\right)_{1\leqslant j,r\leqslant n}, 
\] 
\begin{align}
&B_m(Q,\p_xQ)-B_m(Q,\p_xQ)^{\top}
\nonumber
\\
&=
\left(
m\,\sum_{p,q=1}^n
(\omega^{2,j}_{r,q,p}
-
\omega^{2,r}_{j,q,p}
)\, \p_x(Q_p\, Q_q) 
+ 
\sum_{p,q=1}^n
(\omega^{3,j}_{p,r,q}
-
\omega^{3,r}_{p,j,q}
)
\, \p_x Q_p\, Q_q
\right)_{1\leqslant j,r\leqslant n}. 
\nonumber
\end{align}
It follows from 
\begin{align}
&B_m(Q,\p_xQ)-B_m(Q,\p_xQ)^{\top}
-\p_x(P_2(Q)-P_2^{\top}(Q))
\nonumber
\\
&=
\left(
(m-1)\,\sum_{p,q=1}^n
(\omega^{2,j}_{r,q,p}
-
\omega^{2,r}_{j,q,p}
)\, \p_x(Q_p\, Q_q) 
+ 
\sum_{p,q=1}^n
(\omega^{3,j}_{p,r,q}
-
\omega^{3,r}_{p,j,q}
)
\, \p_x Q_p\, Q_q
\right)_{1\leqslant j,r\leqslant n}.
\nonumber
\end{align}
Considering the symmetry of the first summation term 
of the right-hand side including $\p_x(Q_pQ_q)$ with respect 
to the change of indexes $p$ and $q$, we see 
both \eqref{eq:BmdelP2} with $m\geqslant 4$ and 
also with $m=2$ hold if 
$$
(\omega^{2,j}_{r,q,p}
-
\omega^{2,r}_{j,q,p})
+(\omega^{2,j}_{r,p,q}
-
\omega^{2,r}_{j,p,q})
=0
\quad 
\text{
and 
}
\quad 
\omega^{3,j}_{p,r,q}
-
\omega^{3,r}_{p,j,q}=0
$$ 
for all $p,q,r,j\in \left\{1,\ldots,n\right\}$.
Under (A1), the above conditions are equivalent to 
(D1) and (D2), respectively.
\end{proof}
\section{The case $M\neq aI_n$}
\label{sec:M}
In this section, 
we discuss the case 
when $M=\diag(a_1,\ldots,a_n)$ for 
$(a_1,\ldots,a_n) \in (\mathbb{R} \setminus \{0\})^n$ 
without $a_1=\cdots=a_n$, 
and present the proof of Corollary~\ref{cor:cor2}. 
\begin{proof}[Proof of Corollary~\ref{cor:cor2}] 
The main difficulty in this case lies in the loss of 
commutativity between the principal operator 
and the gauge-type transformations.
Recalling the proof of Theorem~\ref{thm:main}, 
we observe that the assumption $M=aI_n$ is used to 
ensure that every
$\Lambda$, $\Lambda_{1,m}$, $\Lambda_{2,m}$ 
and those with $m=2$  
commutes with $M$. 
In the present case $M=\diag(a_1,\ldots,a_n)\neq aI_n$, 
they do not commute with $M$ in general. 
Any nonvanishing commutator then
causes a loss of one or two derivatives
in the energy estimates,
which prevents the proof strategy of 
Theorem~\ref{thm:main} from working.
However, as is discussed in \cite{Onodera2025}, 
our proof strategy remains valid as far as  
$$[M,\Lambda]=[M,\Lambda_{1,m}]
=[M,\Lambda_{2,m}]=
[M,\Lambda_{1,2}]=
[M,\Lambda_{2,2}]=0_n.$$ 
Moreover, this condition is satisfied if
$\Lambda$,
$\Lambda_{1,m}$ and $\Lambda_{2,m}$ for $m\ge4$,
as well as
$\Lambda_{1,2}$ and $\Lambda_{2,2}$,
are entry-wise diagonal matrices
whose off-diagonal entries all vanish. 
Throughout this section, we shall use the term
``diagonal'' in this sense.
Motivated by this observation, 
we investigate the conditions on the nonlinear terms 
that ensure this property. 
In what follows, 
we impose (A1)--(A2). 
\par 
First, we investigate when 
$\Lambda(Q)$ 
is diagonal in the above sense. 
By \eqref{eq:Lambda} and \eqref{eq:Rh}, 
we see 
$\Lambda(Q)$ is diagonal if and only if 
$P_2(Q)+P_2^{\top}(Q)$ is so.  
By \eqref{eq:P2}, we have  
\[
P_2(Q)+P_2^{\top}(Q)
=
\left(
\sum_{p,q=1}^n
(\omega^{2,j}_{r,q,p}+\omega^{2,r}_{j,q,p})
\, Q_p\, Q_q
\right)_{1\leqslant j,r\leqslant n}. 
\]
Hence, $\Lambda(Q)$ is diagonal if the condition (E1) is satisfied. 
\par 
Second, we investigate when 
$\Lambda_{1,m}(Q,\p_xQ)$ and 
$\Lambda_{1,2}(Q,\p_xQ)$ 
are diagonal. 
By \eqref{eq:Lambda1m} and \eqref{eq:KmSoff},
both $\Lambda_{1,m}$ and $\Lambda_{1,2}$ are diagonal
if and only if
\[
B_m-B_m^{\top}-\p_x(P_2-P_2^{\top})
\]
and the corresponding expression with $m=2$
are diagonal.
Since both matrices are skew-symmetric,
they are diagonal if and only if they vanish identically.
As discussed in Section~\ref{sec:LWP4}, 
the conditions (D1)--(D2) under (A1)
ensure that this matrix vanishes identically.
Note that imposing additional conditions (D1)--(D2) under (A1) 
eliminates the requirement of $m\geqslant 5$ as is also  discussed in Section~\ref{sec:LWP4}. 
\par
Third, we investigate when 
$\Lambda_{2,m}(Q,\p_xQ)$ and 
$\Lambda_{2,2}(Q,\p_xQ)$ 
are diagonal. 
By 
\eqref{eq:Lambda2m}, 
both of them are diagonal if and only if 
both $D_m(Q)$ with $m\geqslant 4$ and $D_2(Q)$ 
are so. 
Recall that each $D_m(Q)$ is specified to satisfy 
the condition ($\widetilde{C2}$).  
By the definition 
\eqref{eq:K_m} of $\mathcal{K}_m(Q,\p_xQ)$, 
it follows that  
\begin{align}
\p_x(D_m(Q))
&=
\begin{pmatrix}
	I_n & 0_n
\end{pmatrix}
\mathcal{R}_{m,S}^{\mathrm{diag}}(Q,\p_xQ)
\begin{pmatrix}
	I_n & 0_n
\end{pmatrix}^{\top}
\nonumber
\\
&\quad 
-\p_x(
\begin{pmatrix}
	I_n & 0_n
\end{pmatrix}
\mathcal{R}_{S}^{\mathrm{diag}}(Q)
\begin{pmatrix}
	I_n & 0_n
\end{pmatrix}^{\top}
).
\nonumber
\end{align}
The first term of the right-hand side is given by the 
spatial derivative of \eqref{eq:RmSdiagp}. 
The second term is given by the spatial derivative 
of $\frac{1}{2}(P_1(Q)-P_1^{\star}(Q))$, 
since $\mathcal{R}_S=\frac{1}{2}(\mathcal{R}-\mathcal{R}^*)$. 
Noting them and using \eqref{eq:P1} on the expression of $P_1$, we obtain   
\begin{align}
D_m(Q)
&=
\dfrac{1}{2}
\,\sum_{p,q=1}^n
\left\{
m
(\omega^{1,j}_{r,q,p}-
\overline{\omega^{1,r}_{j,p,q}}
)
+(\omega^{3,j}_{r,q,p}
-2\overline{\omega^{4,r}_{j,q,p}}
)
\right\}
\,  
Q_p\, \overline{Q_q}
\nonumber
\\
&\quad 
-\dfrac{1}{2}
\sum_{p,q=1}^n
(\omega_{r,q,p}^{1,j}
-\overline{\omega_{j,p,q}^{1,r}}
)
Q_p\overline{Q_q}
\nonumber
\\
&=
\dfrac{1}{2}
\,\sum_{p,q=1}^n
\left\{
(m-1)
(\omega^{1,j}_{r,q,p}-
\overline{\omega^{1,r}_{j,p,q}}
)
+(\omega^{3,j}_{r,q,p}
-2\overline{\omega^{4,r}_{j,q,p}}
)
\right\}
\,  
Q_p\, \overline{Q_q}. 
\nonumber
\end{align}
This shows that both $D_m(Q)$ with $m\geqslant 4$ 
and $D_2(Q)$
are diagonal if the conditions 
(E2) and (E3) are satisfied. 
\par 
Putting these observations together, 
even if 
$M=\diag(a_1,\ldots,a_n)$ for 
$(a_1,\ldots,a_n) \in (\mathbb{R} \setminus \{0\})^n$,  
the local well-posedness 
holds in $H^m$ for any integer $m\geqslant 4$ 
under the conditions (A1)--(A2), (C1)--(C2), (D1)--(D2) and 
additional (E1)--(E3). 
\end{proof}
\begin{remark}
The case
$M=\diag(a_1,\ldots,a_n)$ for
$(a_1,\ldots,a_n)\in (\mathbb{R}\setminus\{0\})^n$
was treated in \cite{Onodera2025} from the same perspective,
where local well-posedness in $H^m$
for integers $m\geqslant 4$
was proved under
(A1)--(A2), (B1)--(B6),
together with the additional conditions
(B7)--(B9) given by
\begin{align}
\tag{B7}
\omega_{k,q,r}^{1,j}
=0
\quad &\text{unless $j=k$ for all} \quad q,r,j,k\in 
\{1,\ldots,n\},
\\
	\tag{B8}
	\omega_{k,q,r}^{2,j}
	=0
	\quad &\text{unless $j=k$ for all} \quad q,r,j,k\in 
	\{1,\ldots,n\},
\\
	\tag{B9}
	\omega_{k,q,r}^{3,j}+2\omega_{k,r,q}^{4,j}
	=0
	\quad &\text{unless $j=k$ for all} \quad q,r,j,k\in 
	\{1,\ldots,n\}.
\end{align}
The set of conditions (A1)--(A2), (C1)--(C2), (D1)--(D2), and (E1)--(E3) 
is weaker than the alternative set of conditions (A1)--(A2), (B1)--(B6), and (B7)--(B9).  
Indeed, the conditions (E1)--(E3) are directly derived from the latter set: 
(E1) from (B8), (E2) from (B7), and (E3) from a combination of (B6) and (B9).
Conversely, conditions such as (B6) and (B9) cannot be derived solely from the former set.
\end{remark}
\section{Proof of Theorem~\ref{thm:main2}}
\label{sec:G}
In this section, we  present the proof of Theorem~\ref{thm:main2}. 
\begin{proof}[Proof of Theorem~\ref{thm:main2}] 
The argument follows the same strategy 
as in the proof of Theorem~\ref{thm:main}, 
by verifying that the additional nonlinear terms 
preserve the key structural properties.
For simplicity, we assume $F(Q,\p_xQ,\p_x^2Q)=0$ and 
focus only on the conditions on $G(Q,\p_xQ,\p_x^2Q)$. 
It is straightforward to see 
that the set of conditions (A2) and (C1)--(C2) on $F$ 
which is the same imposed in Theorem~\ref{thm:main}
and the additional conditions on $G$ 
do not interact with each other. 
\par  
In view of Proposition~\ref{pro:framework} proved in
Section~\ref{sec:proof} and the proof of
Theorem~\ref{thm:main} in Section~\ref{sec:A2C1C2},
the proof of Theorem~\ref{thm:main2} reduces to verifying
($\widetilde{C1}$)-($\widetilde{C2}$) for the problem
\eqref{eq:IVP} with
$F(Q,\p_xQ,\p_x^2Q)$ replaced by
$G(Q,\p_xQ,\p_x^2Q)$.
More precisely, it suffices to show that the structural
conditions (A3)--(A5) and (C3)--(C4)
imply ($\widetilde{C1}$)-($\widetilde{C2}$)
for $m\geqslant5$ and $m=2$.
In what follows, we impose
(A3)--(A5) and (C3)--(C4).
\par 
For this purpose, we use \eqref{eq:U_eq}
to determine $P_1(Q)$ and $A_m(Q,\p_xQ)$,
and then verify ($\widetilde{C1}$)-($\widetilde{C2}$)
for $Q:I_T\times\TT\to\CC^n$.
\par   
First, since $(P_1(Q)\p_x^2U)_j$, which denotes the 
$j$-th component of $P_1(Q)\p_x^2U$,  
is equal to the sum of the terms including $\p_x^2U$ 
in $\p_x^m\{F_j(Q,\p_xQ,\p_x^2Q)\}$, 
we have
\begin{align}
(P_1(Q)\p_x^2U)_j
&=
\sum_{p,q,r=1}^n
\mu_{p,q,r}^{1,j}\,\p_x^2U_p\,Q_q\,Q_r
+
\sum_{p,q,r=1}^n
\mu_{p,q,r}^{3,j}\,\p_x^2U_p\,\overline{Q_q}
\,\overline{Q_r}
\nonumber
\\
&=
\sum_{p,q,r=1}^n
\mu_{r,q,p}^{1,j}\,Q_p\,Q_q\,\p_x^2U_r
+
\sum_{p,q,r=1}^n
\mu_{r,q,p}^{3,j}
\,\overline{Q_p}
\,\overline{Q_q}\,\p_x^2U_r.
\nonumber
\end{align}
From this, it follows that  
\begin{align}
&P_1(Q)
=
\left(
\sum_{p,q=1}^n
\mu_{r,q,p}^{1,j}\,Q_p\,Q_q
+
\sum_{p,q=1}^n
\mu_{r,q,p}^{3,j}
\,\overline{Q_p}
\,\overline{Q_q}
\right)_{1\leqslant j,r\leqslant n}, 
\nonumber
\\
&P_1(Q)+P_1^*(Q)
\nonumber
\\
&=
\left(
\sum_{p,q=1}^n
\left(
\mu_{r,q,p}^{1,j}
+
\overline{\mu_{j,q,p}^{3,r}}
\right)
Q_pQ_q
+
\sum_{p,q=1}^n
\left(
\overline{\mu_{j,q,p}^{1,r}}
+
\mu_{r,q,p}^{3,j}
\right)
\overline{Q_p}\,\overline{Q_q}
\right)_{1\leqslant j,r\leqslant n}.   
\nonumber
\end{align} 
By the relation \eqref{eq:imp1}, 
the condition  
($\widetilde{C1}$) 
holds if and only if $0_n=P_1(Q)+P_1^*(Q)$.
Hence, using the symmetry conditions
(A3) and (A5) to interchange the indices
$p$ and $q$, we obtain
$P_1(Q)+P_1^*(Q)=0_n$
from (C3).
Therefore, ($\widetilde{C1}$) holds.
\par 
Second, 
since $(A_m(Q,\p_xQ)\p_xU)_j$ equals the sum of 
the terms in $\p_x^m\left\{F_j(Q,\p_xQ,\p_x^2Q)\right\}$ 
which include $\p_xU$, 
\begin{align}
(A_m(Q,\p_xQ)\p_xU)_j
&=
m\sum_{p,q,r=1}^n
\mu_{p,q,r}^{1,j}
\,\p_xU_p
\,\p_x(Q_qQ_r)
+
\sum_{p,q,r=1}^n
\mu_{p,q,r}^{2,j}
\,\p_xU_p
\,\p_xQ_q
\,Q_r
\nonumber
\\
&\quad 
+
\sum_{p,q,r=1}^n
\mu_{p,q,r}^{2,j}
\,\p_xQ_p
\,\p_xU_q
\,Q_r
+
m
\sum_{p,q,r=1}^n
\mu_{p,q,r}^{3,j}
\,\p_xU_p
\,\p_x(\overline{Q_q}\,\overline{Q_r})
\nonumber
\\
&\quad 
+
\sum_{p,q,r=1}^n
\mu_{p,q,r}^{6,j}
\,\p_xU_p
\,\overline{\p_xQ_q}\,\overline{Q_r}
\nonumber
\\
&=
m\sum_{p,q,r=1}^n
\mu_{r,q,p}^{1,j}
\,\p_x(Q_pQ_q)
\,\p_xU_r
\nonumber
\\
&\quad 
+
\sum_{p,q,r=1}^n
(\mu_{r,p,q}^{2,j}+\mu_{p,r,q}^{2,j})
\,\p_xQ_p
\,Q_q
\,\p_xU_r
\nonumber
\\
&\quad 
+
m
\sum_{p,q,r=1}^n
\mu_{r,q,p}^{3,j}
\,\p_x(\overline{Q_p}\,\overline{Q_q})
\,\p_xU_r
+
\sum_{p,q,r=1}^n
\mu_{r,p,q}^{6,j}
\,\overline{\p_xQ_p}
\,\overline{Q_q}
\,\p_xU_r.
\nonumber
\end{align}
Thus the $(j,r)$-component of $A_m(Q,\p_xQ)$  
is given by 
\begin{align}
(A_m(Q,\p_xQ))_{jr}
&=
\p_x\left(
\sum_{p,q=1}^n
m
\left(
\mu_{r,q,p}^{1,j}
\,Q_pQ_q+
\mu_{r,q,p}^{3,j}
\overline{Q_p}\,\overline{Q_q}
\right)
\right)
\nonumber
\\
&\quad 
+
\sum_{p,q=1}^n
(\mu_{r,p,q}^{2,j}+\mu_{p,r,q}^{2,j})
\,\p_xQ_p
\,Q_q
+
\sum_{p,q=1}^n
\mu_{r,p,q}^{6,j}
\,\overline{\p_xQ_p}
\,\overline{Q_q}. 
\nonumber
\end{align}
Therefore, a simple computation yields 
\begin{align}
&(A_m(Q,\p_xQ)-A_m^{*}(Q,\p_xQ))_{jr}
\nonumber
\\
&=
\p_x\left(
\sum_{p,q=1}^n
m
\left(
\left(\mu_{r,q,p}^{1,j}
-
\overline{\mu_{j,q,p}^{3,r}}
\right)
\,Q_pQ_q+
\left(\mu_{r,q,p}^{3,j}
-
\overline{\mu_{j,q,p}^{1,r}}
\right)
\overline{Q_p}\,\overline{Q_q}
\right)
\right)
\nonumber
\\
&\quad 
+
\sum_{p,q=1}^n
\left(
\mu_{r,p,q}^{2,j}+\mu_{p,r,q}^{2,j}
-\overline{
\mu_{j,p,q}^{6,r}
}
\right)
\,\p_xQ_p
\,Q_q
-
\sum_{p,q=1}^n
\left(
\overline{\mu_{j,p,q}^{2,r}}
+
\overline{\mu_{p,j,q}^{2,r}}
-\mu_{r,p,q}^{6,j}
\right)
\,\overline{\p_xQ_p}
\,\overline{Q_q}. 
\nonumber
\end{align}
Here, (A4) and (C4) ensure 
$\mu_{r,p,q}^{2,j}+\mu_{p,r,q}^{2,j}
-\overline{
\mu_{j,p,q}^{6,r}}=2\mu_{r,p,q}^{2,j}
-\overline{
\mu_{j,p,q}^{6,r}}$
and the right-hand side is invariant under the change 
of $p$ and $q$. 
By this reason,    
\[
\sum_{p,q=1}^n
\left(
\mu_{r,p,q}^{2,j}+\mu_{p,r,q}^{2,j}
-\overline{
\mu_{j,p,q}^{6,r}
}
\right)
\,\p_xQ_p
\,Q_q
=
\p_x\left(
\dfrac{1}{2}
\sum_{p,q=1}^n
\left(
2\mu_{r,p,q}^{2,j}
-\overline{
\mu_{j,p,q}^{6,r}}
\right)
\,Q_p\,Q_q
\right),  
\]
\[
\sum_{p,q=1}^n
\left(
\overline{\mu_{j,p,q}^{2,r}}
+
\overline{\mu_{p,j,q}^{2,r}}
-\mu_{r,p,q}^{6,j}
\right)
\,\overline{\p_xQ_p}
\,\overline{Q_q}
=
\p_x\left(
\dfrac{1}{2}
\sum_{p,q=1}^n
\left(
2\overline{\mu_{j,p,q}^{2,r}}
-\mu_{r,p,q}^{6,j}
\right)
\,\overline{Q_p}\,\overline{Q_q}
\right). 
\]
Combining them, and recalling the relation \eqref{eq:6181}, 
we can express 
\[
\begin{pmatrix}
	I_n & 0_n
\end{pmatrix}
\mathcal{R}_{m,S}^{\mathrm{diag}}(Q,\p_xQ)
\begin{pmatrix}
	I_n & 0_n
\end{pmatrix}^{\top}
=\p_x\left(
\widetilde{D}_m(Q)
\right),  
\]
where $\widetilde{D}_m(Q)$ is an $n\times n$ matrix-valued 
function on $I_T\times \TT$,   
and each component is of order $|Q|^2$. 
This shows that ($\widetilde{C2}$) with 
$m\geqslant 5$ and $m=2$ is satisfied in this setting.
\end{proof}
\section*{Acknowledgements}
The author has been supported by JSPS Grant-in-Aid for Scientific Research (C) 
Grant Number JP24K06813.
%


\end{document}